\newcommand{\ud}[0]{\,\mathrm{d}}
\newcommand{\dist}[0]{\operatorname{dist}}
\newcommand{\abs}[1]{|#1|}
\newcommand{\Babs}[1]{\Big|#1\Big|}
\newcommand{\Norm}[2]{\|#1\|_{#2}}
\newcommand{\ave}[1]{\langle #1\rangle}
\newcommand{\BMO}[0]{\operatorname{BMO}}
\newcommand{\supp}[0]{\operatorname{supp}}
\newcommand{\sign}[0]{\operatorname{sgn}}
\newcommand{\R}{\mathbb{R}}
\newcommand{\N}{\mathbb{N}}
\newcommand{\eps}[0]{\varepsilon}
\swapnumbers \numberwithin{equation}{section}
\theoremstyle{plain}
\newtheorem{theorem}[equation]{Theorem}
\newtheorem{proposition}[equation]{Proposition}
\newtheorem{corollary}[equation]{Corollary}
\newtheorem{lemma}[equation]{Lemma}
\theoremstyle{definition}
\newtheorem{definition}[equation]{Definition}
\theoremstyle{remark}
\newtheorem{remark}[equation]{Remark}
\begin{document}

\title{The space $JN_p$: nontriviality and duality}

\author[G. Dafni]{Galia Dafni}
\address{(G.D.) Concordia University, Department of Mathematics and Statistics, Montreal, Quebec, H3G-1M8, Canada}
\email{galia.dafni@concordia.ca}

\author[T. Hyt\"onen]{Tuomas Hyt\"onen}
\address{(T.H.) University of Helsinki, Department of Mathematics and Statistics, P.O.B.~68, FI-00014 Helsinki, Finland}
\email{tuomas.hytonen@helsinki.fi}

\author[R. Korte]{Riikka Korte}
\address{(R.K.) Aalto University,
Department of Mathematics and Systems Analysis,
P.O.B. 11100, FI-00076 Aalto, Finland}
\email{riikka.korte@aalto.fi}

\author[H. Yue]{Hong Yue}
\address{(H.Y.)
Georgia College and State University,
Department of Mathematics,
Milledge\-ville, GA 31061, USA}
\email{hong.yue@gcsu.edu}

\thanks{G.D. was partially supported by the Natural Sciences and Engineering Research Council (NSERC) of Canada, the Centre de recherches math\'e{}matiques (CRM) and the Fonds de recherche du Qu\'e{}bec -- Nature et technologies (FRQNT). T.H. was partially supported by the ERC Starting Grant ``AnProb'' (European Research Council grant no. 278558) and the Finnish Centre of Excellence in Analysis and Dynamics Research (Academy of Finland grant nos. 271983 and 307333). R.K. was supported by the Academy of Finland grant no. 308063.  H.Y. was partially supported by GCSU Faculty Development Funds.}

\date{\today}

\keywords{Bounded mean oscillation, John-Nirenberg inequality, atomic decomposition, duality}
\subjclass[2010]{46E30, 42B35}

% 46E30 Spaces of measurable functions (Lp-spaces, Orlicz spaces, KÃ¶the function spaces, Lorentz spaces, rearrangement invariant spaces, ideal spaces, etc.)
% 42B35 Function spaces arising in harmonic analysis 

\maketitle

\begin{abstract}
We study a function space $JN_p$ based on a condition introduced by John and Nirenberg as a variant of $\BMO$. It is known that $L^p\subset JN_{p}\subsetneq L^{p,\infty}$, but otherwise the structure of $JN_p$ is largely a mystery.
Our first main result is the construction of a function that belongs to $JN_p$ but not $L^p$, showing that the two spaces are not the same. Nevertheless, we prove that for monotone functions, the classes $JN_{p}$ and $L^p$ do coincide. Our second main result describes $JN_p$ as the dual of a new Hardy kind of space $HK_{p'}$.
\end{abstract}

\section{Introduction}  

Along with the well-known class of functions of bounded mean oscillation ($\BMO$), John and Nirenberg \cite{JN:61} also introduced 
the following variant of the $\BMO$ condition, which was subsequently used to define what is called the John-Nirenberg space with exponent $p$, denoted by $JN_{p}$. Throughout the paper, it is always understood that $1<p<\infty$. 
 Let $Q_{0}$ be a cube.   As usual we assume cubes have sides parallel to the axes and use $|Q|$ and $\ell(Q)$ to denote the volume and sidelength of $Q$, respectively.

A function $f\in L^1(Q_{0})$ is in $JN_{p}(Q_{0})$ if
\[
\sup\sum_{i}|Q_{i}|\left(\fint_{Q_{i}}|f-\ave f_{Q_{i}}|\right)^p\leq K^p
\]
for some $K<\infty$, where the supremum is taken over all collections of pairwise disjoint cubes $Q_{i}$ in $Q_{0}$, and
 $\ave f_{Q_{i}}$ is the mean of $f$ over $Q_i$.  
We denote the smallest such number $K$ by $\|f\|_{JN_{p}}$.

It is fairly immediate that $L^p\subset JN_p$, but the possibility of equality seems not to have been addressed in the literature. Starting with \cite{JN:61}, several papers \cite{ABKY:11,BKM:16,FPW:98,HMV14,MP:98,MS:16,M:16} prove the inclusion $JN_p\subset L^{p,\infty}$, for the space $JN_p$ as just defined \cite{JN:61}, and for several generalisations or variants of it in the subsequent papers.  Such results would of course trivialise if it turned out that $JN_p$ were just a reformulation of $L^p$. Our first contribution is to show that this is not the case, but that $JN_p$ is indeed a distinct space of its own. While this was probably expected, it does not seem to be completely obvious, even in the one-dimensional setting that we address:

\begin{theorem}\label{thm:main}
Let $p\in(1,\infty)$ and $I\subset\R$ be an interval. Then
\begin{equation}\label{eq:JNpVsLp}
  L^p(I)\subsetneq JN_p(I)\subsetneq L^{p,\infty}(I),
\end{equation}
and $JN_p(I)$ is incomparable with the Lorentz spaces $L^{p,q}(I)$ for $q\in(p,\infty)$. However, the intersections of $L^p(I)$ and $JN_p(I)$ with monotone functions coincide.
\end{theorem}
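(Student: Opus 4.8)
The easy inclusion $L^p(I)\cap\{\text{monotone}\}\subset JN_p(I)\cap\{\text{monotone}\}$ comes from $L^p\subset JN_p$; the plan for the converse --- a monotone $f\in JN_p(I)$ lies in $L^p(I)$ --- is to reduce to a one-dimensional dyadic estimate near a blow-up endpoint. I would begin with standard reductions. An affine change of variable preserves membership in $JN_p$ (the norm picking up only the dilation factor), so we may take $I=[0,1]$. Since negation and the reflection $x\mapsto f(1-x)$ preserve $JN_p$ and $L^p$, since $JN_p$ is closed under $g\mapsto g_+$ --- as $t\mapsto t_+$ is $1$-Lipschitz, one has $\fint_Q\abs{g_+-\ave{g_+}_Q}\le\fint_Q\fint_Q\abs{g(x)-g(y)}\ud x\ud y\le 2\fint_Q\abs{g-\ave g_Q}$ --- and since $f_+$ and $x\mapsto f_-(1-x)$ are non-decreasing whenever $f$ is monotone, it suffices to prove: \emph{every non-decreasing $f\ge0$ in $JN_p([0,1])$ lies in $L^p([0,1])$}. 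If $\lim_{x\to1^-}f(x)<\infty$ then $f$ is bounded and we are done; otherwise $f$ is bounded on $[0,1/2]$, so only $\int_{1/2}^1 f^p$ must be controlled.

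Next I would pass to a discrete inequality near the endpoint $1$. With $I_k:=[1-2^{-k},1-2^{-k-1}]$ and $a_k:=\ave f_{I_k}$, monotonicity gives that $(a_k)_{k\ge1}$ is non-decreasing, that $a_1\le 2\ave f_{[1/2,1]}<\infty$, and that $\sup_{I_k}f\le\inf_{I_{k+1}}f\le a_{k+1}$, so that
\[
\int_{1/2}^1 f^p=\sum_{k\ge1}\int_{I_k}f^p\le\sum_{k\ge1}\abs{I_k}a_{k+1}^p\le\sum_{k\ge1}2^{-k}a_k^p.
\]
Putting $\Delta_j:=a_{j+1}-a_j\ge0$ we have $a_k^p\lesssim_p a_1^p+\big(\sum_{j<k}\Delta_j\big)^p$, and the weighted discrete Hardy inequality with geometric weights $\sum_k 2^{-k}\big(\sum_{j<k}\Delta_j\big)^p\lesssim_p\sum_j 2^{-j}\Delta_j^p$ (a one-line consequence of Hölder's inequality and the summation of geometric series) reduces everything to the bound $\sum_{j}2^{-j}\Delta_j^p\lesssim_p\Norm f{JN_p}^p$.

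To prove that bound I would test the $JN_p$ condition against the intervals $R_j:=I_j\cup I_{j+1}$, of which $I_j$ is the left $\tfrac23$ and $I_{j+1}$ the right $\tfrac13$. For any interval $R=A\cup B$ with $A$ to the left of $B$,
\[
\fint_R\abs{f-\ave f_R}\ \ge\ \frac{1}{\abs R}\Big(\abs A\,\big|\ave f_A-\ave f_R\big|+\abs B\,\big|\ave f_B-\ave f_R\big|\Big)\ =\ \frac{2\abs A\,\abs B}{\abs R^2}\big(\ave f_B-\ave f_A\big),
\]
the last equality using only that $f$ is monotone, so that $\ave f_A\le\ave f_R\le\ave f_B$. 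Applied to $R_j$ this gives $\fint_{R_j}\abs{f-\ave f_{R_j}}\ge\tfrac49\Delta_j$, hence $\abs{R_j}\big(\fint_{R_j}\abs{f-\ave f_{R_j}}\big)^p\gtrsim_p 2^{-j}\Delta_j^p$. Since $\{R_j:j\text{ even}\}$ and $\{R_j:j\text{ odd}\}$ are each pairwise disjoint families of subintervals of $[0,1]$, summing the defining inequality of $JN_p([0,1])$ over them yields $\sum_j 2^{-j}\Delta_j^p\lesssim_p\Norm f{JN_p}^p$; combined with the previous displays and with $\int_0^{1/2}f^p<\infty$ this gives $\int_0^1 f^p\lesssim_p\ave f_{[1/2,1]}^p+\Norm f{JN_p}^p<\infty$, i.e. $f\in L^p([0,1])$.

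The step I expect to be genuinely delicate is this last estimate. A monotone function may pack almost all of its increase across $R_j$ into an arbitrarily small neighbourhood of one point, so the mean oscillation over $R_j$ can be far smaller than $\sup_{R_j}f-\inf_{R_j}f$; the two-subinterval bound above is immune to this because $\ave f_{I_{j+1}}-\ave f_{I_j}$ does not depend on where inside $R_j$ the growth occurs, and monotonicity is precisely what keeps this difference comparable to the actual increase of $f$ from $I_j$ to $I_{j+1}$, which --- through the telescoping and Hardy steps --- is what ultimately forces $f\in L^p$.
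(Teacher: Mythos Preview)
Your proposal treats only the last clause of the theorem --- that monotone $JN_p$ functions lie in $L^p$ --- and says nothing about the strict inclusion $L^p\subsetneq JN_p$ or the Lorentz incomparability. In the paper these are not soft consequences of the monotone result: they require the explicit construction of a function $f\in JN_p\setminus L^p$ (in fact $f\notin L^{p,q}$ for every $q<\infty$), carried out in Section~3 via a careful ``fractal'' arrangement of bump functions together with a short/medium/long case analysis for test intervals. Without such a construction the theorem is not proved; the monotone result alone does not produce any $f\in JN_p\setminus L^p$. You also tacitly restrict to bounded $I$ (the affine reduction to $[0,1]$ needs this); the unbounded case requires a separate limiting argument, as in Remark~\ref{remark:unbounded interval}.

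For the monotone clause itself, your argument is correct but takes a genuinely different route from the paper. The paper (Theorem~\ref{theorem:monotone}) builds an \emph{adaptive} family of test intervals: starting from $\lambda_1=\sup f$, it defines level sets $A_k=\{\lambda_k/2<f\le\lambda_k\}$, $B_k=\{\lambda_k/4<f\le\lambda_k/2\}$, classifies each index as ``small'' or ``good'' according to whether $\abs{A_k}\ll\abs{B_k}$, and for good indices tests $JN_p$ on $I_k=A_k\cup B_k\cup C_k$ with $\abs{C_k}=\abs{A_k}$. The geometry of the good intervals forces $\fint_{I_k}\abs{f-\ave f_{I_k}}\gtrsim\lambda_k$, while small indices are absorbed into the next good one by a geometric series. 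Your approach instead fixes the dyadic intervals $I_k=[1-2^{-k},1-2^{-k-1}]$ once and for all, extracts the increments $\Delta_j=\ave f_{I_{j+1}}-\ave f_{I_j}$ via the two-block lower bound $\fint_{R_j}\abs{f-\ave f_{R_j}}\ge\tfrac{2\abs{I_j}\abs{I_{j+1}}}{\abs{R_j}^2}\Delta_j$, and then recovers $\sum_k 2^{-k}a_k^p$ from $\sum_j 2^{-j}\Delta_j^p$ by a discrete weighted Hardy inequality. This is cleaner and avoids the small/good dichotomy entirely; what the paper's adaptive construction buys is a direct quantitative bound $\Norm{f-\ave f_{I_0}}{L^p}\le c(p)\Norm{f}{JN_p}$, whereas your estimate $\int f^p\lesssim_p\ave f_{[1/2,1]}^p+\Norm f{JN_p}^p$ carries an extra $L^1$ term (harmless for the qualitative statement, and removable if one first subtracts $\ave f_{I_0}$ and uses $\Norm{f-\ave f_{I_0}}{L^1}\le\Norm f{JN_p}$).
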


Theorem \ref{thm:main} indicates that the function space properties of $JN_p$ cannot be immediately deduced from some known results for classical spaces, but require an independent study. Our second main result is the description of $JN_p$ as the Banach space dual of a new ``Hardy kind of'' space $HK_{p'}$. As the details of this duality are somewhat technical, we refer the reader to Section \ref{sec:duality} for a precise statement. Roughly speaking, the space $HK_{p'}$ is defined as an analogue of the atomic description of the Hardy space $H^1$, the well-known predual of $\BMO$; however, reflecting the difference of a supremum over individual cubes in the definition of $\BMO$, and over collections of cubes in $JN_p$, the atoms of $H^1$ are replaced by more complicated structures that we call \emph{polymers} in the definition of $HK_{p'}$. Aside from such technicalities, the proof of our duality result essentially follows a known pattern from the $H^1$-$\BMO$ theory. In contrast to this, the proof of Theorem \ref{thm:main} features phenomena that are new compared to the standard $\BMO$ theory, and we discuss this in some more detail next.

We already pointed out the classical inclusions $L^p\subset JN_p\subset L^{p,\infty}$ \cite{JN:61}. It is also known \cite{ABKY:11} that $JN_p\neq L^{p,\infty}$, which also follows from the fact that $L^p\subsetneq L^{p,\infty}$ and our result  about monotone functions, to be proven in Section \ref{sec:monotone}. The inequality $L^p\neq JN_p$ is established in Section \ref{sec:counter} by exhibiting a concrete example of a function $f\in JN_p\setminus L^p$. The result about monotone functions shows that such a function must necessarily be somewhat complicated. As we check in Remark \ref{rem:Lpq}, the same function also satisfies $f\in JN_p\setminus L^{p,q}$ for every $q<\infty$. On the other hand, the fact that $L^{p,q}\not\subset JN_p$ again follows from the result about monotone functions and the fact that $L^p(I)\subsetneq L^{p,q}(I)$ for $q>p$.

Let us briefly compare Theorem \ref{thm:main} with the well-known limiting case $p=\infty$ corresponding to the space $\BMO$. The analogue of \eqref{eq:JNpVsLp} is $L^\infty\subsetneq \BMO\subsetneq L_{\exp}$, where the second inclusion is the famous John-Nirenberg lemma from \cite{JN:61}, and its strictness is seen e.g.\ by $f(x)=\sign(x)\log\abs{x}$ on $[-1,1]$. For the inequality $L^\infty\subsetneq \BMO$, it suffices to consider the monotone function $f(x)=\log x$ on $[0,1]$, in contrast to the situation of Theorem \ref{thm:main}.

For finite $p<\infty$, the only previously available result related to $L^p\neq JN_p$ is contained in \cite{MP:98}. (This paper, like \cite{FPW:98}, does not explicitly mention the $JN_p$ space, but $JN_p$ is seen as a special case of their more general functionals via the choice $a(B)=\|f\|_{JN_{r}(B)}/|B|^{1/r}$. This connection was observed in \cite{BKM:16}.) However, their counterexample is set on a special metric space (instead of a Euclidean space), which makes it essentially equivalent to a much simpler dyadic situation reproduced in Proposition \ref{prop:dyadic} below.

Despite the number of papers investigating the $JN_p$ space (op.\ cit.), its existing applications seem somewhat limited. After its introduction in \cite{JN:61}, the early papers \cite{C:66,S:65} study these spaces in the context of interpolation of operators. In particular, Campanato \cite{C:66} uses $JN_p$ as a tool to deduce $T:L^p\to L^p$ from the end-points $T:L^\infty\to \BMO$ and $T:L^q\to L^q$, when $q<p<\infty$. This is a widely useful theorem, but its modern proofs do not depend on the $JN_p$ space.

On the other hand, the recently introduced ``$\BMO$-type norms related to the perimeter of sets'' \cite{ABBF:14,ABBF:16,BBM:15}, while not exactly the same as the $JN_p$ norm, have a close similarity, which might motivate a renewed interest in this type of spaces.

As a possible problem for further study, we mention the following: Is there a representation for $JN_p$ functions analogous to the known representation \cite{CR:80} of $\BMO$ functions in the from $\alpha\log(Mg)-\beta\log(Mh)+b$, where $\alpha,\beta\geq 0$ are constants, $g,h$ are positive measurable functions, $M$ is the Hardy-Littlewood maximal operator, and $b$ is a bounded function. Our present contribution does not shed much light on this question.

\begin{remark}
The notation $JN_p$ is borrowed from a number of recent papers, starting with \cite{ABKY:11}, but it is not universal. Stampacchia \cite{S:65} denotes these spaces by $N^{(p,0)}$ (as a special case of certain $N^{(p,\lambda)}$ with a second parameter), whereas Herz \cite{Herz} uses the notation $JN_p$ for a different space (essentially, the $L^p$-based $\BMO$ in a non-classical setting, where the different $L^p$ norms are not necessarily equivalent).
\end{remark}

\subsection*{Acknowledgements.} The authors would like to thank Juha Kinnunen for proposing to study the $JN_{p}$--space. %The authors would also like to thank the referee for carefully reading the paper and for constructive comments.

\section{Monotone functions}\label{sec:monotone}

Next we show that monotone functions are in $JN_{p}$ if and only if they are in $L^p$. We will first consider bounded intervals. See Remark \ref{remark:unbounded interval} for unbounded intervals.

\begin{theorem}\label{theorem:monotone}
Let $I_{0}\subset \R$ and $f: I_{0}\rightarrow \R$ be a monotone function with $f \in L^1(I_0)$. Then there exists $c=c(p)>0$ such that 
\[
\|f\|_{JN_{p}(I_{0})} \geq c\|f-\ave f_{I_{0}}\|_{L^p(I_{0})}.
\]
%If the right-hand-side is not well defined (i.e. $f$ is not integrable) or if $\|f-\ave f_{I_{0}}\|_{L^p(I_{0})}=\infty$, then $\|f\|_{JN_{p}(I_{0})}=\infty$.)
\end{theorem}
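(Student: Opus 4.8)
The plan is to exploit monotonicity to reduce the defining supremum in the $JN_p$ norm to a single, cleverly chosen collection of disjoint intervals that captures essentially all of the $L^p$ size of $f-\ave f_{I_0}$. Assume without loss of generality that $f$ is nondecreasing and, after subtracting the constant $\ave f_{I_0}$, that $\ave f_{I_0}=0$; write $I_0=[a,b]$. The key elementary observation is that for a nondecreasing function on an interval $J=[s,t]$, one has a two-sided comparison between $\fint_J|f-\ave f_J|$ and the oscillation-type quantity $f(t^-)-f(s^+)$ (suitably interpreted), and more usefully, for a \emph{half} of $J$ the mean oscillation is comparable to the jump of $f$ across that half. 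So the strategy is to build a stopping-time / dyadic-type family of intervals adapted to the level sets of $f$.

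\medskip

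\textbf{Step 1: normalization and reduction.} Translate so that $\ave f_{I_0}=0$. Since $f$ is monotone and $\ave f_{I_0}=0$, there is a point $x_0\in I_0$ with $f(x_0^-)\le 0\le f(x_0^+)$, splitting $I_0$ into $I_0^-=[a,x_0]$ where $f\le 0$ and $I_0^+=[x_0,b]$ where $f\ge 0$. It suffices to prove $\|f\|_{JN_p(I_0)}\gtrsim \|f\|_{L^p(I_0^+)}$ (and symmetrically on $I_0^-$), because $\|f\|_{L^p(I_0)}^p=\|f\|_{L^p(I_0^-)}^p+\|f\|_{L^p(I_0^+)}^p$ and one of the two is at least half.

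\medskip

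\textbf{Step 2: a layer-cake / stopping decomposition.} On $I_0^+$, where $f\ge 0$ is nondecreasing, choose a sequence of points by doubling the value: let $\lambda_k=2^k\lambda_0$ and let $x_k=\sup\{x\in I_0^+: f(x)\le \lambda_k\}$, so the intervals $J_k=[x_{k-1},x_k]$ are pairwise disjoint, $f\approx \lambda_k$ on most of $J_k$, and $\sum_k \lambda_k^p|J_k|\approx \|f\|_{L^p(I_0^+)}^p$ by a standard comparison of the $L^p$ norm with the discrete layer-cake sum (here one must be slightly careful: the last, unbounded-value piece near $b$ and the first piece need separate, easy treatment, and the doubling ratio $2$ only costs a $p$-dependent constant). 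The point is that on each $J_k$ the mean oscillation $\fint_{J_k}|f-\ave f_{J_k}|$ is bounded below by $c\,\lambda_k$: indeed $f$ ranges over $[\lambda_{k-1},\lambda_k]$ on $J_k$, so on the upper portion of $J_k$ the function exceeds $\ave f_{J_k}$ by a definite fraction of $\lambda_k$. Thus
\[
\|f\|_{JN_p(I_0^+)}^p \;\ge\; \sum_k |J_k|\Big(\fint_{J_k}|f-\ave f_{J_k}|\Big)^p \;\ge\; c^p\sum_k \lambda_k^p |J_k| \;\gtrsim\; \|f\|_{L^p(I_0^+)}^p.
\]

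\medskip

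\textbf{Step 3: assembling.} Applying the same construction on $I_0^-$ with $|f|$ nonincreasing there, and combining with Step 1, gives $\|f\|_{JN_p(I_0)}\ge c(p)\|f-\ave f_{I_0}\|_{L^p(I_0)}$ after taking the better of the two halves. One should also handle the degenerate/edge cases ($f$ constant, or $f$ unbounded so that some $\lambda_k$ stay relevant all the way to $b$) — these are routine since if $f\notin L^p$ the right-hand side is infinite and one can produce collections making the left-hand side arbitrarily large by the same layering, while if $f\in L^p$ the sums converge.

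\medskip

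The main obstacle I anticipate is \textbf{Step 2}: proving the pointwise-type lower bound $\fint_{J_k}|f-\ave f_{J_k}|\gtrsim \lambda_k$ \emph{uniformly} and simultaneously arranging that $\sum_k\lambda_k^p|J_k|$ genuinely dominates $\|f\|_{L^p}^p$ rather than just a tail of it. The subtlety is that a monotone function can be very flat on a long interval and then jump — so the naive choice of partition points by value may put almost all the mass of $J_k$ at a single level, in which case $\ave f_{J_k}$ is close to that level and the oscillation is small. The fix is to choose the partition adaptively: if $f$ is nearly constant on $J_k$ one discards that interval (it contributes little to the $L^p$ integral relative to neighboring layers) and merges levels; concretely, one selects the $x_k$ so that $|J_k|$ and the "jump" of $f$ over $J_k$ are balanced, e.g. via a stopping condition that stops when \emph{both} the measure has at least halved-relative contribution and the value has doubled. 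Getting a clean, quantitative version of this balancing — and verifying the disjointness and the summation identity with explicit $p$-dependent constants — is where the real work lies; everything else is bookkeeping.
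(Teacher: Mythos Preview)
Your overall plan---normalise to mean zero, work on $\{f>0\}$, partition into level-set intervals where $f\approx\lambda_k=2^k\lambda_0$, and bound the $JN_p$ sum below by $\sum_k\lambda_k^p|J_k|\approx\|f\|_p^p$---is exactly the paper's strategy, and you have correctly located the real obstruction: the lower bound $\fint_{J_k}|f-\ave f_{J_k}|\gtrsim\lambda_k$ fails when $f$ sits near one endpoint of $[\lambda_{k-1},\lambda_k]$ on almost all of $J_k$, and such a $J_k$ is \emph{not} in general negligible for $\|f\|_p^p$ (if $f\approx\lambda_{k-1}$ on a long $J_k$, then $\int_{J_k}f^p\approx\lambda_{k-1}^p|J_k|$ can be the dominant term).

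What is missing is a concrete mechanism to repair this; ``merge levels adaptively'' or ``stop when both measure and value have halved/doubled'' is not yet an argument, and it is not clear how to make such a rule terminate with uniform constants. The paper resolves the difficulty by a different device. Working \emph{downward} from $\lambda_1=\sup f$, set $A_k=\{\lambda_k/2<f\le\lambda_k\}$ and $B_k=\{\lambda_k/4<f\le\lambda_k/2\}$, and split into two cases. If $|A_k|\le 2^{-2p-1}|B_k|$ (``small''), then $\int_{A_k}f^p\le\lambda_k^p|A_k|\le\tfrac12(\lambda_k/4)^p|B_k|\le\tfrac12\int_{B_k}f^p$, so by geometric telescoping the total small contribution is absorbed into the good one. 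If $|A_k|>2^{-2p-1}|B_k|$ (``good''), the paper tests the $JN_p$ norm not on $A_k$ but on the enlarged interval $I_k=A_k\cup B_k\cup C_k$, where $C_k$ is the interval of length $|A_k|$ immediately below $B_k$. On $I_k$ the values in $A_k$ exceed $\lambda_k/2$ while those in $C_k$ are at most $\lambda_k/4$, and $|A_k|=|C_k|$ forces $\int_{I_k}|f-\ave f_{I_k}|\ge\tfrac{\lambda_k}{4}|A_k|$; the good condition then gives $|I_k|\le 2^{2p+2}|A_k|$, whence $|I_k|(\fint_{I_k}|f-\ave f_{I_k}|)^p\gtrsim\lambda_k^p|I_k|\ge\int_{I_k}f_+^p$. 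The key idea your sketch lacks is this \emph{borrowing} of a matched amount of measure $C_k$ from strictly lower levels to manufacture oscillation, combined with the good/small dichotomy to ensure the borrowing does not inflate $|I_k|$; merging upward within your own level intervals $J_k$ cannot produce this, because all values there lie in a single dyadic band.
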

\begin{proof}
 We can prove the inequality for $I_{0}=[0,1]$ and extend it to any finite interval by using the fact that both sides have the same homogeneity with respect to dilations $f \rightarrow f(\delta \cdot)$. %We know that $ JN_{p}(I_{0})\subset L^{p,\infty}(I_{0})\subset L^1(I_{0})$. This implies that if $\ave f_{I_{0}}$ is not finite, then $\|f\|_{JN_{p}(I_{0})}=\infty$. 
Without loss of generality, we may assume that $f:I_{0}\rightarrow\R$ is an increasing function such that $\ave f_{I_{0}}=0$
and that $\|f_{+}\|_{p}\geq \|f_{-}\|_{p}$, where $f_{+}$ and $f_{-}$ are the positive and negative part of $f$, respectively. Moreover, by changing $f$ on a countable
set we can assume it is left-continuous.  

We will first prove the result in the case where  $M=\sup_{I_{0}} f<\infty$. As the constant in the estimate does not depend on $M$, the result easily follows for  unbounded functions as well. This can be seen by considering truncated functions $f_{M}=\max\{\min\{f,M\},-M\}$. As the truncation may not increase the oscillation in any interval, $\|f\|_{JN_{p}(I_{0})} \geq \|f_{M}\|_{JN_{p}(I_{0})}$ for all $0<M<\infty$. The claim now follows by either choosing $M$ large enough so that $\|f_{M}-\ave {f_{M}}_{I_{0}}\|_{L^p(I_{0})}\geq \tfrac12\|f-\ave f_{I_{0}}\|_{L^p(I_{0})}$ in case $f\in L^p(I_{0})$ or by noticing that $\|f_{M}-\ave {f_{M}}_{I_{0}}\|_{L^p(I_{0})}\rightarrow\infty$ as $M\rightarrow\infty$ if $f\notin L^p(I_{0})$.

Let $\lambda_{1}=M$. The other values $\lambda_{k}\in\mathbb R$, $k\in\mathbb N$ or $k\in\{1,2,\ldots, K\}$, will be a decreasing sequence of numbers, defined recursively below, along with sequences of intervals $A_k, B_k, C_k$ and $I_k$. We start with 
$A_{0}=B_{0}=C_{0}=\emptyset$ and for any $k\in\mathbb N$, define the following sets: 
\[
A_{k}=\{x\in I_{0}\,:\, \tfrac{\lambda_{k}}{2}<f(x)\leq \lambda_{k}\}\setminus C_{k-1},
\]
\[
B_{k}=\{x\in I_{0}\,:\, \tfrac{\lambda_{k}}{4}<f(x)\leq \tfrac{\lambda_{k}}{2}\}.
\]

Let us divide the indices into two sets $\mathcal S$ (``\emph{small}'') and $\mathcal G$ (``\emph{good}'') by the following rule: if
\[
|A_{{k}}|\leq 2^{-2p-1}|B_{{k}}|
\]
then we set $k\in \mathcal S$, and otherwise $k\in \mathcal G$.

If ${k}\in \mathcal S$ then we define $I_{k}=A_{k}$, $C_k = \emptyset$, and $\lambda_{k+1}=\lambda_{k}/2$, i.e. $A_{{k+1}}=B_{k}$.

If ${k}\in \mathcal G$ then we define
\[
I_{k}=A_{{k}}\cup B_{{k}}\cup C_{{k}},
\]
where $C_{k} = [c_k,d_k]$ is the interval  adjacent to $B_k$ on the left (i.e.\ $d_k$ is the left endpoint of $B_k$) with $|C_{k}|=|A_{k}|$, and 
\[
\lambda_{k+1}=f(c_{{k}})
\]
 (the continuity of $f$ from the left guaranteeing that  $A_{k+1}$ will not be of zero length),
unless $f(c_{{k}})<0$  or  $c_{{k}} \notin I_{0}$, in which case we stop the process of constructing intervals. Let $K$ be the index where the construction stops.

If $k\in \mathcal S$, then 
\[
\int_{A_{{k}}}f^p\leq \lambda_{k}^p|A_{{k}}|\leq \lambda_{k}^p2^{-2p-1}|B_{{k}}|= \tfrac12 \left(\tfrac{\lambda_{k}}{4}\right)^p|B_{{k}}|\leq \tfrac12\int_{B_{{k}}}f^p=\tfrac12\int_{A_{{k+1}}}f^p.
\]
Thus if $\{m-n,m-n+1,\ldots,m-1\}\subset \mathcal S$ and $m\in\mathcal G$, then
\[\sum_{k=m-n}^{m-1}\int_{A_{k}}f^p\leq \int_{A_{m}}f^p.
\]
If $k\in\mathcal S$ for all $k\geq l$, then let $k_{0}>l$ be any index such that $\lambda_{k_{0}}<1$. Recall that $\lambda_{k_{0}}>0$ always. Then by using the previous estimate, 
as well as the assumptions $\ave f_{I_{0}}=0$ and $|I_0| = 1$, we see that
\[
\sum_{k=l}^{k_{0}-1}\int_{A_{k}}f^p\leq \int_{A_{k_{0}}}f^p\leq |A_{k_{0}}|\lambda_{k_{0}}^p\leq  |A_{k_{0}}|\lambda_{k_{0}}\leq2\int_{I_{0}} f_{+} = \|f\|_{L^1(I_0)}\leq\|f\|_{JN_{p}(I_{0})}.
\]
Letting $k_{0}\rightarrow\infty$, we see that
\[
\sum_{k=l}^{\infty}\int_{A_{k}}f^p\leq \|f\|_{JN_{p}(I_{0})}.
\]
Since $I_k = A_k$ for $k \in  \mathcal S$ and $I_k \supset A_k$ for $k \in  \mathcal G$, the estimates above give
\begin{equation}
\label{small-k}
\sum_{k \in \mathcal S}\int_{I_{k}}f^p \leq \sum_{k \in \mathcal G}\int_{I_{k}}f_{+}^p
+  \|f\|_{JN_{p}(I_{0})}.
\end{equation}

If ${k}\in \mathcal G$ (and $c_{{k}} \in I_{0}$ and $f(c_{k})\geq 0$), then $|A_{k}|=|C_{k}|\geq2^{-2p-1}|B_{k}|$ and therefore $|I_{k}|\leq 2^{2p+2}|A_{k}|$.
Notice that for any $x \in A_k$, $y \in C_k$, $|f(x) - \langle f\rangle_{I_k}| + |\langle f\rangle_{I_k} - f(y)| \geq f(x) - f(y) \geq \frac{\lambda}{2} - \frac{\lambda}{4}$ and therefore
\[
\int_{A_{k}}|f-\ave f_{I_{k}}| + \int_{C_{k}}|f-\ave f_{I_{k}}| \geq (\inf_{A_{{k}}} |f-\ave f_{I_{k}}|+\inf_{C_{{k}}}|\ave f_{I_{k}}  - f|) |A_k| \geq \tfrac{\lambda_{k}}{4}|A_k|.
\]
Now we can estimate
\[
\begin{split}
|I_{k}|\left(\fint_{I_{k}}|f-\ave f_{I_{k}}|\right)^p&\geq |I_{k}|^{1-p}\left( \tfrac{\lambda_{k}}{4} |A_{\lambda_{k}}|  \right)^p\\
& \geq \left( 2^{-2p-4}\right)^p |I_{k}|\lambda_{k}^p \\
&\geq  2^{-2p^2-4p}\int_{I_{k}}f_{+}^p.
\end{split}
\]

Let us consider the last constructed interval in case the construction stops at some point. (It is also possible that the  construction gives us infinitely many intervals.) Recall that we stop the construction when either $f(c_{{K}})<0$ or $c_{{K}}\notin I_{0}$. If $c_{{K}}\in I_{0}$, then we can estimate the integrals on this interval in the same way as for other indices in $\mathcal G$.

If $c_{{K}}\notin I_{0}$, then we have to consider a shorter interval $C_{{K}}=[a,d_{K}]$, where $d_{K}$ is chosen as usual. In this case
\[
\int_{I_{K}}f\leq 0
\]
as $\{f<0\}\cap I_{0}\subset I_K$ and $\ave f_{I_{0}}=0$. Since $f\geq \tfrac{\lambda_{K}}{4}$ in $A_{{K}}\cup B_{{K}}$, we have
\[
\begin{split}
|I_{k}|\left(\fint_{I_{K}}|f-\ave f_{I_{K}}|\right)^p&\geq |I_{K}|^{1-p}\left( \tfrac{\lambda_{K}}{4}(|A_{{K}}|+|B_{{K}}|)  \right)^p\\
& \geq  |I_{K}|^{1-p}\left( \tfrac{\lambda_{K}}{8} |I_{K}| \right)^p\\
&\geq |I_{K}|2^{-3p}\lambda_{K}^p\geq 2^{-3p}\int_{I_{K}}f_{+}^p.
\end{split}
\]
If we include $K$ in $\mathcal G$, we can combine the two estimates above to get
\begin{equation}
\label{good-k}
\sum_{k\in \mathcal G}c_{p}\int_{I_{k}}f_{+}^p \leq \sum_{k\in\mathcal G} |I_{k}|\left(\fint_{I_{k}}|f-\ave f_{I_{k}}|\right)^p, 
\end{equation}
with $c_{p}=2^{-2p^2-3p}$.

Using  (\ref{small-k}) and (\ref{good-k}), and noting that the intervals $\{I_k\}$ are pairwise disjoint and their
union covers $\{f > 0\}$, we conclude that
 \[
\frac 1 2\int_{I_{0}}|f|^p \leq \int_{I_{0}}f_{+}^p = \sum_{k\in \mathcal G}\int_{I_{k}}f_{+}^p +   \sum_{k\in \mathcal S}\int_{I_{k}}f_{+}^p
\leq (2 c_p^{-1}+  1)\|f\|_{JN_{p}(I_{0})}.
\]
\end{proof}

\begin{remark}\label{remark:unbounded interval}
The result also holds for unbounded intervals. Indeed, let $I$ be such an interval (i.e., either the full line $\R$ or a half-line), and $I_n$ be an increasing sequence of finite intervals converging to $I$, say $I_n:=I\cap[-n,n]$. By the result for bounded intervals and elementary monotonicity properties of the norms, we have
\begin{equation*}
  \Norm{f-\ave{f}_{I_n}}{L^p(I_m)}\leq \Norm{f-\ave{f}_{I_n}}{L^p(I_n)}\lesssim\Norm{f}{JN_p(I_n)}\leq\Norm{f}{JN_p(I)}
\end{equation*}
for all $m\leq n$. Here and below the symbol $\lesssim$ indicates the presence of constants in the inequality. Thus
\begin{equation*}
\begin{split}
  \abs{\ave{f}_{I_m}-\ave{f}_{I_n}}
  &=\abs{I_m}^{-1/p}\Norm{\ave{f}_{I_m}-\ave{f}_{I_n}}{L^p(I_m)} \\
  &\leq\abs{I_m}^{-1/p}(\Norm{\ave{f}_{I_m}-f}{L^p(I_m)}+\Norm{f-\ave{f}_{I_n}}{L^p(I_m)}) \\
  &\lesssim\abs{I_m}^{-1/p}\Norm{f}{JN_p(I)}\to 0
\end{split}
\end{equation*}
as $m\to\infty$. Hence $\ave{f}_{I_n}$ converges to some limit $c\in\R$. Hence by Fatou's lemma
\begin{equation*}
  \int_{I}\abs{f-c}^p
  =\int_{I}\lim_{n\to\infty}1_{I_n}\abs{f-\ave{f}_{I_n}}^p
  \leq\liminf_{n\to\infty}\int_{I_n}\abs{f-\ave{f}_{I_n}}^p
  \lesssim\Norm{f}{JN_p(I)}^p,
\end{equation*}
so that indeed $\Norm{f-c}{L^p}\lesssim\Norm{f}{JN_p}$ as claimed.

We note that the case when $I=\R$ can be obtained by a more direct argument, which does not rely on the considerations in the case of finite intervals: Consider a monotone function $f:\R\rightarrow\R$. If $f$ is a constant function, then clearly $f-c\in L^p(\R)\cap JN_{p}(\R)$ and any other monotone functions are not $L^p$-integrable. If $f$ is not constant, we may assume that it is increasing. Then there exists some $\delta>0$ and $a\in\R$ such that $f(a+1)-f(a-1)>2\delta$. Let $I_{k}=[a-k-1,a+k+1]$. Now $|f-\ave f_{I_{k}}|\geq\delta$ either in $[a-k-1,a-1]$ or $[a+1,a+k+1]$. Thus for any $k\in\N$,
\[
\|f\|_{JN_{p}}\geq |I_{k}|\left(\fint_{I_{k}}|f-\ave f_{I_{k}}|dx\right)^p\geq |I_{k}|\left(\frac{\delta\cdot k}{|I_{k}|}\right)^p=\delta^p\frac{k^p}{(2k+2)^{p-1}}.
\]
Letting $k\rightarrow\infty$, we see that $\|f\|_{JN_{p}}=\infty$.
\end{remark}

\section{The counterexample}\label{sec:counter}

One can define a dyadic counterpart of the John-Nirenberg space $JN_{p,dyadic}$ in a natural way by taking the supremum over pairwise disjoint collections of dyadic cubes, see for example \cite{BKM:16} for more details.
It is rather easy to find an example of a function in $JN_{p}\setminus L^p$ in the dyadic case. 
We start with this easy example even though this idea does not work when the $JN_{p}$-norm is taken over all cubes.
For another example with the same idea, see Section 5 in \cite{MP:98}.

\begin{proposition}\label{prop:dyadic}
There exists $f\in JN_{p,dyadic}([0,1])\setminus L^p([0,1]).$
\end{proposition}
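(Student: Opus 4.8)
The plan is to construct $f$ on $[0,1]$ explicitly as a sum of building blocks supported on a disjoint family of dyadic intervals, with the blocks chosen so that the local oscillations contribute a \emph{summable} series to the $JN_p$ functional while the $L^p$ norms contribute a \emph{divergent} series. Concretely, I would pick a sequence of pairwise disjoint dyadic subintervals $I_k\subset[0,1]$, say $I_k$ the dyadic interval $[2^{-k},2^{-k+1})$ of length $2^{-k}$, and on each $I_k$ let $f$ equal a constant height $a_k$ (and $0$ elsewhere), with the $a_k$ to be calibrated. Since $f$ is constant on each $I_k$ and $0$ off $\bigcup I_k$, the mean oscillation of $f$ over any dyadic cube is easy to compute: over a dyadic cube $Q$ that is contained in some $I_k$ the oscillation is $0$, and over a dyadic cube $Q$ containing several of the $I_k$ one gets a mean oscillation controlled by $|Q|^{-1}\sum_{I_k\subset Q} a_k|I_k|$.

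The key computation is then to bound $\sup\sum_i |Q_i|(\fint_{Q_i}|f-\ave{f}_{Q_i}|)^p$ over disjoint dyadic families $\{Q_i\}$. Because any dyadic cube either sits inside one $I_k$ (contributing nothing) or contains a block of consecutive $I_k$'s, and because disjoint dyadic cubes containing blocks of $I_k$'s must involve disjoint blocks of indices, the supremum reduces to estimating $\sum_m |Q_m| (\fint_{Q_m}|f-\ave f_{Q_m}|)^p$ where the $Q_m$ are the minimal dyadic cubes each containing a chunk of the $\{I_k\}$. The worst case is when each $Q_m$ is as small as possible, and here the choice of the $I_k$ as a lacunary sequence of dyadic intervals helps: the smallest dyadic cube containing $I_k$ and $I_{k+1},\dots$ has length comparable to $2^{-k+1}$, so taking $Q_k$ to be that cube one finds the contribution of the $k$-th term is of order $2^{-k}\big(a_k + \text{(tail terms)}\big)^p$, and with $a_k$ chosen decreasing this is dominated by $\sum_k 2^{-k} a_k^p$. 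Meanwhile $\|f\|_{L^p}^p = \sum_k a_k^p |I_k| = \sum_k a_k^p 2^{-k}$ as well — so at first glance the two are comparable, which means the naive constant choice is \emph{not} enough; one needs to exploit more carefully that the $JN_p$ functional compares $f$ to its \emph{mean}, which is small, whereas a more clever arrangement makes the oscillation over large cubes much smaller than $a_k$ while keeping $\|f\|_{L^p}$ large.

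The right fix is to make the heights grow: choose $a_k = k^{\alpha}$ (or $a_k$ slowly increasing) with the lengths still $|I_k|=2^{-k}$, or, more symmetrically, split each ``level'' so that the mass at height $\sim 2^j$ lives on a set of measure $\sim 2^{-j}/j^{\beta}$; then $\|f\|_{L^p}^p \sim \sum_j 2^{jp}\cdot 2^{-j}/j^\beta = \sum_j 2^{j(p-1)}/j^\beta = \infty$ while, crucially, when one computes the oscillation over a dyadic cube $Q$ containing all the levels from some $j_0$ up, the mean $\ave f_Q$ is already of size comparable to the top height, so $|f-\ave f_Q|$ on the bulk of $Q$ is much smaller than $f$ itself, gaining a factor that renders $\sum |Q_i|(\fint_{Q_i}|f-\ave f_{Q_i}|)^p < \infty$. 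I would verify the $JN_p$ bound by the reduction above: reduce to disjoint \emph{minimal} dyadic cubes, on each such cube bound the mean oscillation by (mean of $f$ over the cube), which telescopes into a geometric-type sum because of the dyadic lacunarity, and check the resulting series converges with the stated choice of parameters.

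The main obstacle I anticipate is exactly this calibration: one must choose the heights/lengths on the boundary between $L^p$-divergence and $JN_p$-convergence, and then prove the $JN_p$ bound uniformly over \emph{all} disjoint dyadic families, not just the ``obvious'' one. The technical heart is the combinatorial lemma that any disjoint dyadic family can be reduced — without decreasing the functional by more than a constant — to a family of minimal dyadic cubes each of which is the dyadic hull of a consecutive run of the $I_k$'s, together with the observation that on such a hull the mean oscillation is dominated by a geometric sum in the heights. Once that reduction is in place, the convergence of $\sum_k 2^{-k}(\sum_{j\geq k} a_j 2^{-(j-k)})^p$ (or its analogue for the level-set construction) is a routine estimate, using that the inner sum is comparable to $a_k$ times a constant when $a_j$ grows sub-geometrically, so the series behaves like $\sum_k 2^{-k}a_k^p$, and one picks $a_k$ so that $\sum_k 2^{-k}a_k^p<\infty$ but $\sum_k 2^{-k}a_k^p\cdot(\text{something})=\infty$ — concretely $a_k=2^{k/p}k^{-2/p}$ gives $\|f\|_{L^p}^p\sim\sum k^{-2}<\infty$... so in fact the cleanest route is the dual one: fix $|I_k|=2^{-k}$, $a_k=2^{k/p}$, giving $\|f\|_{L^p}^p=\sum_k 1=\infty$, and check that the oscillation gains enough decay from the $|Q_i|^{1-p}$ factor against the dyadic hull lengths to make the $JN_p$ series converge — that verification, the one genuinely needing care, is what the author's proof presumably carries out in detail.
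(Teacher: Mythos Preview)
Your final construction---$f=\sum_k 2^{k/p}\,1_{[2^{-k},2^{-k+1})}$---is exactly the paper's, and the $L^p$ divergence is the same one-line computation. But you are massively overcomplicating the $JN_{p,\mathrm{dyadic}}$ verification, and in doing so you miss the structural point that makes the dyadic case trivial.

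The observation you are missing is this: the only dyadic subintervals of $[0,1]$ on which $f$ is \emph{not} constant are the intervals $[0,2^{-k})$, $k\geq 0$, and these are \emph{nested}. Consequently, any pairwise disjoint family of dyadic intervals can contain \emph{at most one} interval with nonzero mean oscillation. There is no need for a ``combinatorial lemma'' reducing general disjoint families to minimal hulls of consecutive runs of the $I_k$, no ``disjoint blocks of indices'', and no uniformity issue over all families: the supremum in the $JN_p$ functional is simply
\[
\sup_{k\geq 0}\ \abs{[0,2^{-k})}\Big(\fint_{[0,2^{-k})}\abs{f-\ave{f}_{[0,2^{-k})}}\Big)^{p},
\]
a supremum over a single index. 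Bounding this one term by $c_p$ is a short geometric-series estimate (the paper does it in four lines).

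Your meandering through various calibrations (polynomial heights, level-set constructions, $a_k=2^{k/p}k^{-2/p}$, etc.) and your worry that ``at first glance the two are comparable'' both stem from not having noticed this nestedness. Once you see it, the choice $a_k=2^{k/p}$ is forced and the whole proof is under ten lines.
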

\begin{proof}
Let $f:(0,1)\rightarrow\R$ be defined as follows:
\[
f(x)=
2^{k/p},\quad \text{if }2^{-k}\leq x< 2^{-k+1},\quad k=1,2,\ldots\\
\]
Now $f\notin L^p([0,1])$ since
\[
\int_{0}^{1}f^{p}dx=\sum_{k=1}^{\infty}((2^{k/p})^{p}(2^{-k+1}-2^{-k})=\sum_{k=1}^{\infty}1=\infty.
\]

Let $I\subset [0,1]$ be a dyadic interval. We see that $f$ is constant in $I$ unless $I=I_{k}=[0,2^{-k}] $ for some $k\in \N$. Thus there can be at most one non-zero term in the sum of $JN_{p}$-norm and
\[
\begin{split}
&|I_{k}|\left(\fint_{I_{k}}|f-\ave f_{I_{k}}|dx\right)^p\\
\leq &2^{-k}\left(2^{k+1}\int_{0}^{2^{-k}}f\, dx \right)^p\\
=& 2^{-k}\left(2^{k+1}\sum_{m=k+1}^\infty 2^{m/p}2^{-m} \right)^p\\
=& 2^{-k}\left(2^{k+1} \frac{2^{k(1/p-1)}}{2^{(1-1/p)}-1} \right)^p\leq c_{p}\\
\end{split}
\]
Thus $f$ is in dyadic $JN_{p}$.
\end{proof}

Now we give an example in the general case.

\begin{proposition}\label{prop:badf}
There exists a function $f\in JN_p\setminus L^p$.
\end{proposition}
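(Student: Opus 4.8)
The plan is to build on the dyadic example of Proposition~\ref{prop:dyadic} but to spread its mass across many scales and locations so that no single efficient covering by arbitrary (non-dyadic) cubes can detect a large oscillation. The dyadic function $f(x)=2^{k/p}$ on $[2^{-k},2^{-k+1})$ fails in the general setting because an interval like $[2^{-k-1}+\eps,2^{-k+1}]$, straddling a dyadic breakpoint, sees an oscillation of order $2^{k/p}$ on a set of measure $\sim 2^{-k}$, and summing these over $k$ diverges. To defeat this, I would take a function that is, on each dyadic scale, a sum of many widely separated "bumps" — say on the $k$-th block $[2^{-k},2^{-k+1})$ place $N_k$ disjoint sub-bumps of height $h_k$, chosen so that $\sum_k N_k h_k^p |\text{bump}|=\infty$ (forcing $f\notin L^p$) while any interval overlapping several bumps is forced, by the separation, to also contain a comparable amount of the "low" region between them, which dilutes its mean oscillation.

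Concretely, I would try $f=\sum_{k} 2^{k/p}\sum_{j} 1_{J_{k,j}}$ where the $J_{k,j}$ are $2^k$ (or some polynomially growing number of) tiny intervals, each of length $2^{-k}/2^{2k}$ or so, evenly spaced inside a fixed reference interval such as $[1,2]$ rather than stacked near the origin — the point being to decouple "height" from "location." Then $\|f\|_p^p=\sum_k 2^k\cdot 2^k\cdot (2^{-k}2^{-2k})=\sum_k 2^{-2k}<\infty$, so I would instead need the bump count or heights tuned to keep the $L^p$ integral divergent while the mean oscillation over every test cube stays bounded; balancing these two competing requirements is the crux of the construction. The natural parametrization is: heights $a_k\to\infty$, bump widths $w_k$, bump counts $n_k$, with $\sum_k n_k a_k^p w_k=\infty$ for $f\notin L^p$, and then one must verify $\sum_i |Q_i|(\fint_{Q_i}|f-\ave f_{Q_i}|)^p\le K^p$ uniformly.

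For the $JN_p$ estimate I would split an arbitrary disjoint family $\{Q_i\}$ according to how each $Q_i$ interacts with the bump structure: (i) cubes contained in a region where $f$ is constant contribute nothing; (ii) cubes contained in or comparable to a single bump $J_{k,j}$ contribute at most $\sim |J_{k,j}|\,a_k^p$, and the total of these over all bumps is exactly $\sum_k n_k w_k a_k^p$, which I will therefore need to keep finite — meaning the divergence forcing $f\notin L^p$ must come from a subtler source, e.g. logarithmic rather than power accumulation, or from the global tail of $f$ near a limit point of scales; (iii) the genuinely dangerous cubes are those overlapping many bumps of a common height, and here the separation hypothesis must guarantee that such a cube $Q$ satisfies $\fint_Q|f-\ave f_Q|\lesssim a_k\cdot(\text{fraction of }Q\text{ covered by bumps})$, with that fraction small enough that $|Q|(\fint_Q|f-\ave f_Q|)^p$ is summable in a geometric fashion. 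The main obstacle — and the place I expect to spend the most effort — is exactly this third case: designing the heights and spacings so that the bad cubes of all possible positions and scales simultaneously give a convergent geometric-type sum, while still arranging (through many more bumps, or an extra logarithmic factor, or letting $f$ also be unbounded at a single accumulation point in a controlled way) that $f$ escapes $L^p$. I would reverse-engineer the parameters from the requirement that the worst multi-bump cube at scale $k$ contributes $\lesssim 2^{-k}$ to the $JN_p$ sum.
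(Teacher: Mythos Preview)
Your proposal identifies the central obstacle correctly but then heads in a direction that cannot resolve it. You observe in your case (ii) that cubes straddling the boundary of a single bump $J_{k,j}$ contribute $\sim w_k a_k^p$, and that summing this over all bumps gives $\sum_k n_k w_k a_k^p$. For a function that is a sum of \emph{disjoint} indicator bumps, this sum \emph{is} $\|f\|_p^p$; there is no ``subtler source'' for the $L^p$ divergence. Moreover, your ``widely separated'' hypothesis guarantees that one can actually realise this sum by a single disjoint family: choose for each bump a cube of size $\sim 2w_k$ straddling one of its endpoints (the wide gaps ensure these cubes miss all other bumps and are mutually disjoint). So wide separation turns your observed tension into a hard contradiction: $\|f\|_{JN_p}^p \gtrsim \sum_k n_k w_k a_k^p = \|f\|_p^p$, and you cannot have the right side infinite while the left is finite.

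The paper resolves this by going in the \emph{opposite} direction. The gaps $d_i$ between a level-$i$ bump and its neighbours are made much \emph{smaller} than the bump widths $\ell_i$ (specifically $d_i/\ell_i \sim 2^{-i}$), so that a cube touching only one bump can overhang by at most $d_i$, giving contribution $\lesssim d_i h_i^p$ rather than $\ell_i h_i^p$; one then has $\sum_i 2^i d_i h_i^p<\infty$ while $\sum_i 2^i \ell_i h_i^p=\|f\|_p^p=\infty$. Of course narrow gaps force something to sit in them, and the paper fills each gap with the bumps of the next level, producing a hierarchical tree (indexed by dyadic intervals) rather than a flat array. The resulting ``long'' cubes---those overshooting an entire subtree on one side---are the genuinely delicate case, handled by a Carleson-family argument: if $J$ swallows all descendants of $\hat I$ on one side, then any other $J'$ in the disjoint family with $I_{J'}\subsetneq I$ must lie in the other half of $I$, forcing $\sum |I|\le 2$ over the relevant $I$. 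Neither the narrow-gap geometry nor this packing argument appears in your sketch, and without them the parameters cannot be ``reverse-engineered'' to satisfy both constraints simultaneously.
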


We construct a family of functions $f_I$ indexed by $I\in\mathscr{D}$, the dyadic subintervals of $[0,1)$. We want to point out that $f_{I}$ is not the mean value of $f$ (recall
that for the mean value of $f$ on an interval $J$ we use the notation $\ave f_{J}$); moreover, the functions $f_I$ are not supported in the intervals
$I$ but rather in corresponding intervals $\hat{I}$ which will be defined shortly, and are not in any way assumed
to be dyadic.  The structure of the dyadic intervals is used only in order to simplify the construction (for example
instead of using indices $i,j$, we index by the interval $I = [2^{-i}j, 2^{-i}(j+1))$).  We denote the length of a generic
interval $I\in\mathscr D$ by $\abs{I}=2^{-i}$, without always mentioning this relation of $I$ and $i$ explicitly.

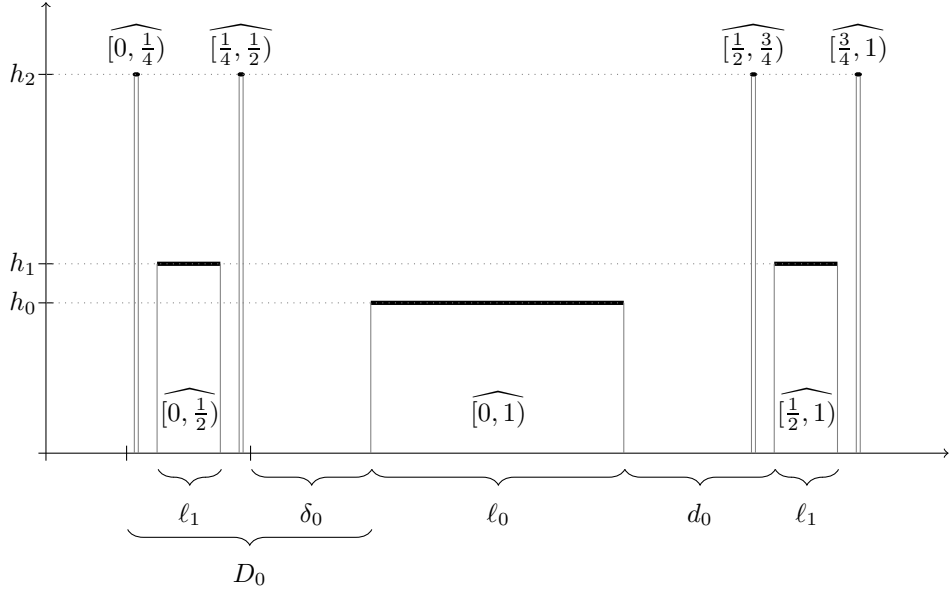
\begin{figure}
\begin{tikzpicture}
\draw[black,->] (-6.1,0) -- (6,0);
\draw[thin,->] (-6,-0.1) -- (-6,6) ;

% 0th generation
\draw[black, ultra thick] (4*0.42044820762,2*1)--(4*-0.42044820762,2*1);
\draw[gray] (4*0.42044820762,0)--(4*0.42044820762,2*1);
\draw[gray] (4*-0.42044820762,0)--(4*-0.42044820762,2*1);

\draw (-6.1, 2*1)--(-5.9,2*1);
\draw (-6,2*1) node[left] {$h_{0}$};
\draw[dotted,gray] (-6,2*1)--(4*0.42044820762,2*1);

% 1st generation

\draw[black, ultra thick] (4*0.92044820762,2*1.25992104989)--(4*1.13067231144,2*1.25992104989);
\draw[gray] (4*0.92044820762,0)--(4*0.92044820762,2*1.25992104989);
\draw[gray] (4*1.13067231144,0)--(4*1.13067231144,2*1.25992104989);

\draw[black, ultra thick] (4*-0.92044820762,2*1.25992104989)--(4*-1.13067231144,2*1.25992104989);
\draw[gray] (4*-0.92044820762,0)--(4*-0.92044820762,2*1.25992104989);
\draw[gray] (4*-1.13067231144,0)--(4*-1.13067231144,2*1.25992104989);

\draw (-6.1, 2*1.25992104989)--(-5.9,2*1.25992104989);
\draw (-6,2*1.25992104989) node[left] {$h_{1}$};
\draw[dotted,gray] (-6,2*1.25992104989)--(4*1.13067231144,2*1.25992104989);

% 2nd generation
\draw[black,ultra thick](4*1.19317231144,2*2.51984209979)--(4*1.20631131793,2*2.51984209979) node[black,midway,yshift=0.4cm] {$\widehat{[\tfrac34,1)}$};
\draw[gray] (4*1.19317231144,0)--(4*1.19317231144,2*2.51984209979);
\draw[gray] (4*1.20631131793,0)--(4*1.20631131793,2*2.51984209979);
\draw[black, fill=black] (4*1.19317231144,2*2.51984209979) circle (.015cm);
\draw[black, fill=black] (4*1.20631131793,2*2.51984209979) circle (.015cm);

\draw[black,ultra thick](-4*1.19317231144,2*2.51984209979)--(-4*1.20631131793,2*2.51984209979)  node[black,midway,yshift=0.4cm] {$\widehat{[0,\tfrac14)}$}  ;
\draw[gray] (-4*1.19317231144,0)--(-4*1.19317231144,2*2.51984209979);
\draw[gray] (-4*1.20631131793,0)--(-4*1.20631131793,2*2.51984209979);
\draw[black, fill=black] (-4*1.19317231144,2*2.51984209979) circle (.015cm);
\draw[black, fill=black] (-4*1.20631131793,2*2.51984209979) circle (.015cm);

\draw[black, ultra thick](4*0.85794820762,2*2.51984209979)--(4*0.84480920113,2*2.51984209979) node[black,midway,yshift=0.4cm] {$\widehat{[\tfrac12,\tfrac34)}$};
\draw[gray, ultra thin] (4*0.85794820762,0)--(4*0.85794820762,2*2.51984209979);
\draw[gray, thin] (4*0.84480920113,0)--(4*0.84480920113,2*2.51984209979);
\draw[black, fill=black] (4*0.85794820762,2*2.51984209979) circle (.015cm);
\draw[black, fill=black] (4*0.84480920113,2*2.51984209979) circle (.015cm);

\draw[black, ultra thick](-4*0.85794820762,2*2.51984209979)--(-4*0.84480920113,2*2.51984209979)  node[black,midway,yshift=0.4cm] {$\widehat{[\tfrac14,\tfrac12)}$};
\draw[gray] (-4*0.85794820762,0)--(-4*0.85794820762,2*2.51984209979);
\draw[gray] (-4*0.84480920113,0)--(-4*0.84480920113,2*2.51984209979);

\draw[black, fill=black] (-4*0.85794820762,2*2.51984209979) circle (.015cm);
\draw[black, fill=black] (-4*0.84480920113,2*2.51984209979) circle (.015cm);

\draw (-6.1, 2*2.51984209979)--(-5.9,2*2.51984209979);
\draw (-6,2*2.51984209979) node[left] {$h_{2}$};
\draw[dotted,gray] (-6,2*2.51984209979)--(4*1.19317231144,2*2.51984209979);

%R
%1.19317231144
%1.20631131793
%2.51984209979
%
%0.85794820762
%0.84480920113
%2.51984209979

% braces
\draw [yshift=-0.2cm,decorate,decoration={brace,amplitude=6pt,mirror},xshift=0.4pt,yshift=-0.4pt](-4*0.42044820762,0) -- (4*0.42044820762,0) node[black,midway,yshift=-0.6cm] { $\ell_{0}$} node[black,midway,yshift=0.8cm] {$\widehat{[0,1)}$};

\draw [yshift=-0.2cm,decorate,decoration={brace,amplitude=6pt,mirror},xshift=0.4pt,yshift=-0.4pt](4*0.92044820762,0) -- (4*1.13067231144,0) node[black,midway,yshift=-0.6cm] { $\ell_{1}$}  node[black,midway,yshift=0.8cm] {$\widehat{[\tfrac12,1)}$} ;

\draw [yshift=-0.2cm,decorate,decoration={brace,amplitude=6pt,mirror},xshift=0.4pt,yshift=-0.4pt](-4*1.13067231144,0) --(-4*0.92044820762,0) node[black,midway,yshift=-0.6cm] { $\ell_{1}$}  node[black,midway,yshift=0.8cm] {$\widehat{[0,\tfrac12)}$} ;

% delta
\draw [yshift=-0.2cm,decorate,decoration={brace,amplitude=6pt,mirror},xshift=0.4pt,yshift=-0.4pt](-4*0.82,0) --(-4*0.42044820762,0) node[black,midway,yshift=-0.6cm] { $\delta_{0}$}   ;
\draw [black]  (-4*0.82,-0.1)--(-4*0.82,0.1);

%D
\draw [yshift=-1cm,decorate,decoration={brace,amplitude=6pt,mirror},xshift=0.4pt,yshift=-0.4pt](-4*1.232,0) --(-4*0.42044820762,0) node[black,midway,yshift=-0.6cm] { $D_{0}$}   ;
\draw [black]  (-4*1.232,-0.1)--(-4*1.232,0.1);

\draw [yshift=-0.2cm,decorate,decoration={brace,amplitude=6pt,mirror},xshift=0.4pt,yshift=-0.4pt](4*0.42044820762,0) -- (4*0.92044820762,0) node[black,midway,yshift=-0.6cm] { $d_{0}$};

\end{tikzpicture}

\caption{\label{picture}First three generations ($i=0,1,2$) in the construction of $f$ (with $p=3$). The distances $\delta_{i}$ and $D_{i}$ are defined in Lemma \ref{lemma:delta}; in particular, $\delta_{0}$ (resp.\ $D_{0}$) is the distance from $\widehat{[0,1)}$ to the nearest (resp.\ farthest) $\hat{I}$ after infinitely many iterations of the construction.} 
\end{figure}

For every $I\in\mathscr D$, we define the numbers
\begin{itemize}
	\item (``length'') $\ell_I:=\ell_i:=2^{-(i+1/2)^2}$,
	\item (``distance'') $d_I:=d_i:=2^{-(i+1)^2}$,
	\item (``height'') $h_I:=h_i:=2^{i^2/p}$.
\end{itemize}

For every $I\in\mathscr D$, we define another interval $\hat I$ recursively as follows:
\begin{itemize}
	\item For $I^0:=[0,1)$, let $\hat I^0$ be some interval of length $\ell_{0}$.
	\item If $\hat I$ is already defined and $I'$ is the left (right) half of $I$, let $\hat I'$ be the interval of length $\ell_{I'}$ positioned on the left (right) side of $\hat I$ in such a way that $\dist(\hat I',\hat I)=d_I$.
\end{itemize}

Finally, let
\begin{itemize}
	\item $f_I:=h_I\cdot 1_{\hat I}$,
	\item $f_I^*:=\sum_{I'\subseteq I}f_{I'}$,
	\item $f:=f_{[0,1)}^*$.
\end{itemize}
We will use the word descendants of $\hat{I}$
to refer to $\hat{I'}$ for any proper subinterval $I' \subset I$, and similarly for the corresponding functions $f_I$.

See Figure \ref{picture} for the first steps in the construction.  While the
function $f$ is defined on the whole line, it is in fact supported in a finite interval of length $\ell_0 + 2D_0$, as will be seen 
in the following lemma.

\begin{lemma}\label{lemma:delta}
Let $I'$ be the left or right half of $I$, and define the distances
\begin{equation*}
\begin{split}
  \delta_i &=\delta_I:=\dist(\hat I,\supp f_{I'}^*)
  =\inf_{x\in \supp f_{I'}^*}\dist(\hat I,x), \\
  D_i &=D_I:=\sup_{x\in \supp f_{I'}^*}\dist(\hat I,x).
\end{split}
\end{equation*}
Then
\begin{equation*}
    \frac12 d_i\leq \delta_i\leq d_i\leq D_i\leq 3d_i.
\end{equation*}
In particular, all the functions $f_I$ are disjointly supported.
\end{lemma}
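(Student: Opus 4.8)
The plan is to prove, by induction on depth in the dyadic tree, that the support of each $f_J^*$ is trapped inside a mildly enlarged copy of $\hat J$; all the inequalities of the lemma then drop out of the super-exponential decay of the parameters. Write $|J|=2^{-j}$ and $\hat J=[\alpha_J,\beta_J]$, so $\beta_J-\alpha_J=\ell_j$. From the explicit formulas $\ell_i=2^{-(i+1/2)^2}$ and $d_i=2^{-(i+1)^2}$ I would first record the arithmetic facts $\ell_{i+1}=2^{-(i+5/4)}d_i<d_i$ and $d_{i+1}\leq 2^{-3}d_i$, and, setting $s_j:=\sum_{k\geq j}(d_k+\ell_{k+1})$, the facts that the terms of this series shrink by a factor $\leq\tfrac3{16}$ at each step (so that $s_j\leq 2(d_j+\ell_{j+1})$ and in particular $s_{j+1}\leq\tfrac12 d_j$) and that $s_j=d_j+\ell_{j+1}+s_{j+1}$.

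The heart of the proof is the claim that $\supp f_J^*\subseteq H_J:=[\alpha_J-s_j,\ \beta_J+s_j]$ for every $J\in\mathscr D$. Since $f_J^*=\sum_{J''\subseteq J}h_{J''}1_{\hat{J''}}$ with all $h_{J''}>0$, this amounts to showing $\hat{J''}\subseteq H_J$ for every dyadic $J''\subseteq J$, which I would prove by induction on the number $n$ of halvings carrying $J$ down to $J''$. The case $n=0$ is trivial. If $n\geq1$, then $J''$ lies inside one half of $J$, say the left half $J_L$, at depth $n-1$ below it, so the inductive hypothesis (applied with $J_L$ in place of $J$) gives $\hat{J''}\subseteq H_{J_L}$, and it remains to check $H_{J_L}\subseteq H_J$. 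By the construction of $\hat{J_L}$ we have $\hat{J_L}=[\alpha_J-d_j-\ell_{j+1},\ \alpha_J-d_j]$, whence $H_{J_L}=[\alpha_J-d_j-\ell_{j+1}-s_{j+1},\ \alpha_J-d_j+s_{j+1}]$; its left endpoint equals $\alpha_J-s_j$ by the identity $s_j=d_j+\ell_{j+1}+s_{j+1}$, and its right endpoint is $<\alpha_J\leq\beta_J+s_j$ because $s_{j+1}<d_j$. The right half is symmetric. As a by-product, $H_{J_L}$ lies strictly to the left of $\hat J$ and $H_{J_R}$ strictly to the right, so that $H_{J_L}\cap H_{J_R}=\emptyset$.

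Granting this, the lemma follows at once. Let $I'$ be the left half of $I$ (the right half being symmetric). Then $\supp f_{I'}^*\subseteq H_{I'}$ lies strictly to the left of $\hat I$, while $\hat{I'}=[\alpha_I-d_i-\ell_{i+1},\ \alpha_I-d_i]\subseteq\supp f_{I'}^*$; hence $\sup\supp f_{I'}^*\in[\alpha_I-d_i,\ \alpha_I-d_i+s_{i+1}]$ and $\inf\supp f_{I'}^*\in[\alpha_I-d_i-\ell_{i+1}-s_{i+1},\ \alpha_I-d_i-\ell_{i+1}]$. Reading off the distances to $\hat I$ and using $s_{i+1}\leq\tfrac12 d_i$ and $\ell_{i+1}\leq d_i$ gives $\tfrac12 d_i\leq\delta_i\leq d_i\leq D_i\leq\tfrac52 d_i<3d_i$, as required. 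For the disjointness of the $f_I=h_I1_{\hat I}$: given $I\neq J$ in $\mathscr D$, either one lies in a half of the other --- say $J\subseteq I'$ with $I'$ a half of $I$, so that $\hat J\subseteq\supp f_{I'}^*\subseteq H_{I'}$, which is disjoint from $\hat I$ --- or the smallest common dyadic ancestor $K$ of $I$ and $J$ has them in opposite halves, so that $\hat I\subseteq H_{K_L}$ and $\hat J\subseteq H_{K_R}$, again disjoint; in either case $\hat I\cap\hat J=\emptyset$.

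The only genuine difficulty here is bookkeeping: because the recursion defining $\hat I$ has infinite depth, the induction in the second step must be carried out on the number of halvings rather than on the generation index, and one has to keep the left/right placements consistent throughout. Once the enclosing interval $H_J$ is correctly set up, every quantitative estimate is an immediate consequence of the Gaussian-type decay of $\ell_i$ and $d_i$.
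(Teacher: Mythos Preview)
Your proof is correct and follows essentially the same route as the paper: your series $s_i=\sum_{k\ge i}(d_k+\ell_{k+1})$ is exactly the paper's $D_I$, and your bound $\delta_i\ge d_i-s_{i+1}$ is the paper's $\delta_I=d_I-D_{I'}$, so the only difference is that you package the extremal-chain observation into a clean induction via the enclosing intervals $H_J$ rather than arguing it directly. One minor point: the loose consequence $s_j\le 2(d_j+\ell_{j+1})$ that you record is not by itself sharp enough to give the intermediate $D_i\le\tfrac52 d_i$ you state (it only gives about $2.84\,d_i$), but this is harmless since the lemma only asks for $3d_i$, and the sharper geometric bound implicit in your ratio $\le\tfrac{3}{16}$ does yield $\tfrac52 d_i$ if you want it.
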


\begin{proof}
Note that $\supp f_{I'}^*$ is the union of all $\hat I''$, where $I''\subseteq I'$. Thus $D_I$ is the maximal (or supremal) distance of $\hat I$ from any point of $\hat I''$, among all dyadic descendants $I''$ of $I'$. This maximal distance is achieved by considering descendants always on the same side of $\hat I$. With this in mind, let $I_0:=I$, $I_1:=I'$ and recursively $I_k$ be the left (right) half of $I_{k-1}$ if $I'$ is the left (right) half of $I$. Then the distance $D_I$ is formed by subsequently summing up the distance between two consecutive intervals $\hat I_{k}$ and $\hat I_{k+1}$, and the length of the next interval $\hat I_{k+1}$.
Thus
\begin{equation*}
\begin{split}
  D_I &=\sum_{k\geq 0}( \dist(\hat I_{k},\hat I_{k+1})+\abs{\hat I_{k+1}} )
    =\sum_{k\geq 0} (d_{I_{k}}+\ell_{I_{k+1}}) \\
 &=\sum_{k\geq 0} (d_{i+k}+\ell_{i+k+1})
   =\sum_{k\geq 0} (2^{-(i+1+k)^2}+2^{-(i+1+k+1/2)^2}) \\
  &\leq 2^{-(i+1)^2}\sum_{k\geq 0}(2^{-k^2}+2^{-(k+1/2)^2})
    \leq 3\cdot 2^{-(i+1)^2}=3 d_i.
\end{split}
\end{equation*}

Similarly, $\delta_I$ is the minimal (or infimal) distance of $\hat I$ from any point of $\hat I''$, among all dyadic descendants $I''$ of $I'$. If we take, without loss of generality, 
$I'$ to be the left half of $I$, this minimal distance is achieved by always
choosing the descendants $I''$ of $I'$ on the right, so that $\hat{I''}$ gets closer and closer to $\hat{I}$.  From the computation above,
the maximum distance that points in these $\hat{I''}$ lie to the right of $\hat{I'}$ is $D_{I'}$, and therefore, recalling that $d_I$ is the
distance from $\hat{I}$ to $\hat{I'}$, 
\begin{equation*}
\begin{split}
  \delta_I=d_I-D_{I'}
 & \geq d_i-3d_{i+1}
  =2^{-(i+1)^2}-3\cdot 2^{-(i+2)^2} \\
 & \geq 2^{-(i+1)^2}(1-3\cdot 2^{-2(i+1)-1})
 \geq 2^{-(i+1)^2}(1-3/8)>\tfrac12 d_i.
\end{split}
\end{equation*}

We finally come to the claim concerning disjoint supports of the $f_I$. Since $\delta_I>0$ is the minimal distance of $\supp f_I$ from any $\supp f_{I''}$ with $I''\subsetneq I$, we see that each $f_I$ is disjointly supported from its descendants $f_{I''}$ with $I''\subsetneq I$. If, on the other hand, $I_1,I_2\subset[0,1)$ are two disjoint dyadic intervals, then we can find the smallest dyadic interval $I$ that contains both $I_1$ and $I_2$, and thus (possibly after reindexing) $I_1$ must be contained in the left half of $I$ and $I_2$ in the right half. But then $\hat I_1$ lies on the left side of $\hat I$ and $\hat I_2$ on the right side, so that clearly $\hat I_1$ and $\hat I_2$ are disjoint. Since any two dyadic intervals are either disjoint or one is a descendant of the other one, we have checked the disjointness of the supports $\supp f_I=\hat I$ in all cases.
\end{proof}

\begin{lemma}
	For all $I\in\mathscr D$, we have
	\begin{itemize}
		\item $\Norm{f_I}{1}=2^{-i^2/p'-i-1/4}\leq\Norm{f_I^*}{1}\leq c_p\Norm{f_I}{1}$, where $p':=p/(p-1)$, and
		\item $\Norm{f_I^*}{p}=\infty$, so in particular $\Norm{f}{p}=\infty$.
	\end{itemize}
\end{lemma}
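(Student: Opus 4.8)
The plan is to reduce every norm computation to a sum over dyadic generations by invoking the disjointness of the supports $\supp f_{I'}=\hat{I'}$ proved in Lemma~\ref{lemma:delta}. For a sum $g=\sum_j g_j$ of disjointly supported functions one has $\Norm{g}{q}^q=\sum_j\Norm{g_j}{q}^q$ for every $q\in[1,\infty)$; applied to $f_I^*=\sum_{I'\subseteq I}f_{I'}$ and grouping together the $2^k$ dyadic subintervals $I'\subseteq I$ of length $2^{-(i+k)}$, this expresses $\Norm{f_I^*}{q}^q$ as $\sum_{k\geq 0}2^k\Norm{f_{I'_k}}{q}^q$, where $I'_k$ denotes any generation-$(i+k)$ subinterval of $I$ (the value $\Norm{f_{I'}}{q}^q=h_{i+k}^q\ell_{i+k}$ depends only on the generation). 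Everything then comes down to evaluating these terms and summing.

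For the $L^1$ part, first I would compute $\Norm{f_I}{1}=h_i\ell_i=2^{i^2/p}2^{-(i+1/2)^2}$ and simplify the exponent using $1/p-1=-1/p'$ to obtain $i^2/p-(i+1/2)^2=-i^2/p'-i-1/4$, which is the asserted value; the same computation gives $\Norm{f_{I'_k}}{1}=2^{-(i+k)^2/p'-(i+k)-1/4}$. Dividing the series $\Norm{f_I^*}{1}=\sum_{k\geq 0}2^k\Norm{f_{I'_k}}{1}$ by $\Norm{f_I}{1}$ yields $\sum_{k\geq 0}2^{-[(i+k)^2-i^2]/p'}$. The $k=0$ term equals $1$, which (all terms being nonnegative) gives the lower bound $\Norm{f_I^*}{1}\geq\Norm{f_I}{1}$; and since $(i+k)^2-i^2=2ik+k^2\geq k^2$ for $i\geq 0$, the whole series is bounded above by $\sum_{k\geq 0}2^{-k^2/p'}=:c_p<\infty$, giving the upper bound.

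For the $L^p$ part, the crucial point is that the heights $h_i=2^{i^2/p}$ are tuned so that $\Norm{f_{I'_k}}{p}^p=h_{i+k}^p\ell_{i+k}=2^{(i+k)^2}2^{-(i+k+1/2)^2}=2^{-(i+k)-1/4}$, i.e.\ the $L^p$ mass of a single generation-$(i+k)$ piece decays exactly like the reciprocal of the number $2^k$ of such pieces (up to the fixed factor $2^{-i-1/4}$). Hence $\Norm{f_I^*}{p}^p=\sum_{k\geq 0}2^k\cdot 2^{-(i+k)-1/4}=\sum_{k\geq 0}2^{-i-1/4}=\infty$, and taking $I=[0,1)$ gives $\Norm{f}{p}=\infty$. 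I do not anticipate any real obstacle beyond this bookkeeping; the one thing that must be checked carefully is precisely this balance between the decay of $\Norm{f_{I'}}{p}^p$ and the exponential growth in the number of intervals per generation — it is exactly this balance that is the mechanism behind $f\notin L^p$, and it is where the specific exponents in the definitions of $\ell_i$ and $h_i$ enter.
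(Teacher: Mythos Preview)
Your proposal is correct and follows essentially the same approach as the paper: both arguments use the disjointness of the supports (from Lemma~\ref{lemma:delta}) to reduce $\Norm{f_I^*}{q}^q$ to a sum over generations, evaluate $\Norm{f_{I'}}{q}^q=h_j^q\ell_j$ termwise, and then bound the resulting geometric-type series. The only cosmetic difference is your change of summation index (generation offset $k$ rather than absolute level $j=i+k$) and your device of dividing by $\Norm{f_I}{1}$ to normalize the $L^1$ sum; the paper arrives at the same constant $c_p=\sum_{k\geq 0}2^{-k^2/p'}$ by the same inequality $(i+k)^2-i^2\geq k^2$.
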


\begin{proof}
	Clearly
	\begin{equation*}
	\Norm{f_I^*}{1}\geq \Norm{f_I}{1}=h_i\ell_i=2^{i^2/p}2^{-i^2-i-1/4}
	=2^{-i^2/p'-i-1/4}.
	\end{equation*}
	Hence
	\begin{equation*}
	\Norm{f_I^*}{1}
	=\sum_{I'\subseteq I}\Norm{f_{I'}}{1}
	=\sum_{j\geq i}2^{j-i}2^{-j^2/p'-j-1/4}
	=2^{-i-1/4}\sum_{j\geq i}2^{-j^2/p'}\leq c_p\Norm{f_I}{1},
	\end{equation*}
	since there are $2^{j-i}$ intervals $I'\subseteq I$ of length $2^{-j}$, and since
	\begin{equation*}
	\sum_{j\geq i}2^{-j^2/p'}
	=\sum_{k\geq 0}2^{-(i+k)^2/p'}
	\leq 2^{-i^2/p'}\sum_{k\geq 0}2^{-k^2/p'}
	=c_p 2^{-i^2/p'}.
	\end{equation*}
	On the other hand, since the functions $f_{I'}$ are disjointly supported,
	\begin{equation*}
	\Norm{f_I^*}{p}^p
	=\sum_{I'\subseteq I}\Norm{f_{I'}}{p}^p
	=\sum_{j\geq i} 2^{j-i}h_j^p\ell_j
	=\sum_{j\geq i} 2^{j-i}2^{j^2}2^{-(j+1/2)^2}
	=2^{-i-1/4}\sum_{j\geq i}1=\infty.
	\end{equation*}
	\end{proof}

Next we start estimating the $JN_{p}$-norm of $f$. From now on, let $\mathscr J$ be a collection of pairwise disjoint intervals. Without loss of generality we may assume that $\mathscr J$ does not contain any intervals where $f$ is constant,  as such intervals do not contribute to the $JN_{p}$-norm.

\begin{lemma}
For an interval $J$, let
\begin{equation*}
  F(J):=\abs{J}^{1-p}\Big(\int_J\abs{f-\ave{f}_J}\Big)^{p}.
\end{equation*}
If $F(J)\neq 0$, there is a unique largest interval $I=I_J\in\mathscr D$ such that $J\cap \hat I\neq\varnothing$, and in this case $J$ must also intersect the boundary of $\hat I$.

In particular, for every $I\in\mathscr D$, there are at most two intervals $J\in\mathscr J$ such that $I=I_J$.
\end{lemma}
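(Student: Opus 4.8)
The plan is to extract everything from two facts: the supports $\hat I$, $I\in\mathscr D$, are pairwise disjoint (Lemma~\ref{lemma:delta}), and — in the sharper form established inside its proof — for $I'$ the left (resp.\ right) half of $I$ the whole set $\supp f_{I'}^*=\bigcup_{I''\subseteq I'}\hat{I''}$ lies entirely to the left (resp.\ right) of $\hat I$, separated from it by the gap $\delta_I=d_I-D_{I'}>0$. Since $f=\sum_{I\in\mathscr D}h_I 1_{\hat I}$ with every $h_I>0$ and the $\hat I$ disjoint, we have $\{f\ne0\}=\bigcup_{I\in\mathscr D}\hat I$, and $f\equiv h_I$ on any subinterval contained in a single $\hat I$.

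First I would handle existence and uniqueness of $I_J$. If $F(J)\ne0$ then $f$ is not a.e.\ constant on $J$, so $J$ cannot be disjoint from $\bigcup_I\hat I$; hence $\mathcal I_J:=\{I\in\mathscr D:J\cap\hat I\ne\varnothing\}$ is nonempty, and its elements have some minimal generation $i_0\ge 0$. The crucial geometric observation is: \emph{if $I_1,I_2\in\mathcal I_J$ are disjoint and $I$ is the smallest dyadic interval containing both, then in fact $\hat I\subseteq J$, so $I\in\mathcal I_J$.} Indeed $I_1,I_2$ lie in the two opposite halves $I_L,I_R$ of $I$, so $\hat{I_1}\subseteq\supp f_{I_L}^*$ lies entirely to the left of $\hat I$ while $\hat{I_2}\subseteq\supp f_{I_R}^*$ lies entirely to its right; since $J$ is an interval meeting both $\hat{I_1}$ and $\hat{I_2}$, it contains the whole segment spanned by a point of $\hat{I_1}$ and a point of $\hat{I_2}$, and that segment contains $\hat I$. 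Granting this, two distinct generation-$i_0$ intervals in $\mathcal I_J$ would be disjoint and would force a common ancestor of strictly smaller generation into $\mathcal I_J$, contradicting minimality of $i_0$; so there is a \emph{unique} $I_J\in\mathcal I_J$ of minimal generation. The same observation shows $I_J$ contains every member of $\mathcal I_J$: an $I'\in\mathcal I_J$ not contained in $I_J$ is either strictly larger than $I_J$ (impossible, its generation being $<i_0$) or incomparable to it (then the smallest common ancestor lies in $\mathcal I_J$ and is again too large), so $I_J$ is the unique largest interval of $\mathcal I_J$.

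Next, for $I=I_J$ I would show $J$ meets $\partial\hat I$. If not, then since $J$ and $\hat I$ are intervals with $J\cap\hat I\ne\varnothing$ and $J\cap\partial\hat I=\varnothing$, necessarily $J\subseteq\hat I$ (a connected set meeting $\hat I$ but neither endpoint must sit inside it); but every other $\hat{I'}$ is disjoint from $\hat I$, hence from $J$, so $f\equiv h_I$ a.e.\ on $J$ and $F(J)=0$, a contradiction. Finally the ``in particular'' follows at once: every $J\in\mathscr J$ with $I_J=I$ meets the two-point set $\partial\hat I$, and since the intervals of $\mathscr J$ are pairwise disjoint, each endpoint of $\hat I$ belongs to at most one of them; hence at most two $J\in\mathscr J$ satisfy $I_J=I$.

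I expect the only delicate point to be the sidedness assertion — that $\supp f_{I_L}^*$ lies strictly \emph{to the left of} $\hat I$, not merely at distance $\ge\delta_I$ from it. This is precisely the content of the estimate $\delta_I=d_I-D_{I'}>0$ together with the left/right placement built into the recursive definition of $\hat{\cdot}$, both already recorded in the proof of Lemma~\ref{lemma:delta}; so it is essentially a matter of phrasing that computation in the form needed here. The remaining steps are routine bookkeeping on the dyadic tree.
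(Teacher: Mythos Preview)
Your proof is correct and follows essentially the same approach as the paper's: both hinge on the geometric fact that whenever $J$ meets two $\hat I_u$ coming from disjoint dyadic $I_u$, the interval $\hat I$ corresponding to their smallest common ancestor lies between them and is therefore caught by $J$. The paper states this only for the special case of equal-length $I_1,I_2$ (which already suffices for uniqueness of the maximal-length $\hat I$), whereas you phrase it for arbitrary disjoint $I_1,I_2$ and use it additionally to show $I_J$ contains every member of $\mathcal I_J$; this extra conclusion is not needed for the lemma, so the difference is one of thoroughness rather than strategy.
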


\begin{proof}
Since $f$ is supported on the union of the intervals $\hat I$, if $F(J)\neq 0$, clearly $J$ must intersect some $\hat I$. Suppose that $J$ intersects two intervals $\hat I_u$, $u=1,2$ of equal length. But then, by construction (see Figure \ref{picture}), there is a bigger interval $\hat I$ (where $I$ can be taken, as above,
to be the smallest dyadic interval containing $I_1$ and $I_2$) lying between the intervals $\hat I_u$, and $J$ must also intersect $\hat I$. This shows that there is a unique interval $\hat I$ of maximal length.

If $J\subseteq \hat I$, then $F(J)=0$, since $f$ is constant on $\hat I$. Thus $J$ must also intersect the complement of $\hat I$, and hence the boundary of $\hat I$.
The boundary of $\hat I$ has only two points, so a disjoint collection $\mathscr J$ can contain at most two intervals $J$ like this.
\end{proof}

\begin{lemma}\label{lem:FJ}
If $F(J)\neq 0$ and $I=I_J$, then
\begin{equation*}
  F(J)\leq c_p\Big[\abs{J}^{1-p}\Big(\int_{J\setminus\hat I}f\Big)^p+\min(\abs{J\cap\hat I},\abs{J\setminus\hat I})h_I^p\Big].
\end{equation*}
\end{lemma}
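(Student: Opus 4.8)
The plan is to split the integral defining $F(J)$ according to whether we are inside $\hat I$ or not, using the fact that $f$ is constant equal to $h_I$ on $\hat I$ (by the definition $f_I = h_I 1_{\hat I}$, and since $I = I_J$ is the largest dyadic interval meeting $J$, no larger-level piece contributes, while on $\hat I$ all the smaller-level pieces $f_{I'}$ with $I'\subsetneq I$ vanish by the disjoint-support Lemma \ref{lemma:delta}). Write $a:=\ave{f}_J$ and estimate
\[
\int_J\abs{f-a}
=\int_{J\cap\hat I}\abs{h_I-a}+\int_{J\setminus\hat I}\abs{f-a}
\leq \abs{J\cap\hat I}\,\abs{h_I-a}+\int_{J\setminus\hat I}f+\abs{J\setminus\hat I}\,a,
\]
where on $J\setminus\hat I$ I bound $\abs{f-a}\leq f+a$ since $f\geq 0$. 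The mean $a=\abs{J}^{-1}(\abs{J\cap\hat I}h_I+\int_{J\setminus\hat I}f)$, so $a\leq h_I$ (because $f\leq h_I$ throughout: any $\hat{I'}$ meeting $J$ has $\abs{I'}\leq\abs{I}$, hence level $\geq i$, hence height $h_{I'}=2^{(i')^2/p}\leq$ ... wait, heights \emph{increase} with the index; I must instead use that on $J\setminus\hat I$ the relevant pieces are descendants of $I$, which do have larger height). So the genuinely useful bound is $\abs{h_I-a}\le a\le \abs{J}^{-1}\int_J f$, together with the trivial $\abs{h_I - a}\le h_I + a$; I will need to combine these with the term $\int_{J\setminus\hat I}f$ rather than discard it.

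Concretely, I will treat two cases. First, if $\abs{J\cap\hat I}\leq\abs{J\setminus\hat I}$: then $a\leq 2\abs{J}^{-1}\int_{J\setminus\hat I}f \cdot(\text{no})$ — more carefully, $\abs{J\cap\hat I}h_I\leq \abs{J\setminus\hat I}h_I$, and I want the contribution of the $\hat I$-part controlled by $\min(\cdot,\cdot)h_I^p=\abs{J\cap\hat I}h_I^p$; this falls out of $\abs{J\cap\hat I}\abs{h_I-a}\le\abs{J\cap\hat I}h_I$ raised appropriately, after using $\abs{J}^{1-p}(\abs{J\cap\hat I}h_I)^p\leq\abs{J\cap\hat I}h_I^p$ since $\abs{J\cap\hat I}\le\abs{J}$. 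The remaining terms $\int_{J\setminus\hat I}f$ and $\abs{J\setminus\hat I}a$ are both $\lesssim\int_J f = \int_{J\cap\hat I}f+\int_{J\setminus\hat I}f\le\abs{J\cap\hat I}h_I+\int_{J\setminus\hat I}f$, and the first of these again feeds into the $\min\cdot h_I^p$ term. Second, if $\abs{J\setminus\hat I}\leq\abs{J\cap\hat I}$: now $\min=\abs{J\setminus\hat I}$, and I bound $\abs{J\cap\hat I}\abs{h_I-a}\le\abs{J}\,a - \abs{J\cap\hat I}\cdot(\ldots)$; the cleaner route is $\abs{J\cap\hat I}\abs{h_I - a} = \abs{J\cap\hat I}(h_I-a)\le \abs{J}(h_I - a)=\abs{J\setminus\hat I}h_I - \int_{J\setminus\hat I}f \le \abs{J\setminus\hat I}h_I$ when $a\le h_I$, and I must verify $a\leq h_I$. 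Since every point of $J\setminus\hat I$ lying in $\supp f$ lies in some $\hat{I'}$ with $I'$ a descendant of $I$, and descendants have height $h_{I'}\ge h_I$, it is \emph{not} true that $f\le h_I$ on $J\setminus\hat I$; so instead I use $a=\abs{J}^{-1}\int_J f$ and bound $\abs{h_I-a}\le h_I+a$, giving $\abs{J\cap\hat I}\abs{h_I-a}\le\abs{J}h_I+\int_J f\le 2\abs{J}h_I + \int_{J\setminus\hat I}f$, and then $\abs{J}^{1-p}(\abs{J}h_I)^p=\abs{J}h_I^p$; but I need $\abs{J\setminus\hat I}h_I^p$, not $\abs{J}h_I^p$. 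Resolving this is the crux: I will exploit that $J$ meets the \emph{boundary} of $\hat I$ (previous lemma), so $J$ extends outside $\hat I$, and in the case $\abs{J\setminus\hat I}\le\abs{J\cap\hat I}$ the set $J\setminus\hat I$ is the part of $J$ sticking out past one endpoint of $\hat I$ — from here $\abs{J}\le\abs{\hat I}+\abs{J\setminus\hat I}\le 2\abs{\hat I}$ (since $\abs{J\cap\hat I}\le\abs{\hat I}$ trivially and $\abs{J\setminus\hat I}\le\abs{J\cap\hat I}\le\abs{\hat I}$), but that still gives $\abs{J}h_I^p$, not the smaller quantity; the genuine point must be that when $J\setminus\hat I$ is tiny, the oscillation of $f$ over $J$ is forced to be small because most of $J$ sees the constant $h_I$, i.e. $\abs{h_I-a}\lesssim \abs{J\setminus\hat I}\abs{J}^{-1}\sup_{J}f$, which is the direction I will pursue.

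The main obstacle, therefore, is the second case, $\abs{J\setminus\hat I}\le\abs{J\cap\hat I}$: here I must show that $\abs{J}^{1-p}(\int_J\abs{f-\ave f_J})^p$ is bounded by $\abs{J\setminus\hat I}h_I^p$ plus the outside term, and the key computation is
\[
\int_J\abs{f-a}\le 2\int_{J\setminus\hat I}\abs{f-a}\le 2\int_{J\setminus\hat I}f+2\abs{J\setminus\hat I}a,
\]
using that $\int_{J\cap\hat I}\abs{h_I-a}=\abs{J\cap\hat I}\abs{h_I-a}$ and $\abs{J\cap\hat I}\abs{h_I-a}\le\int_{J\setminus\hat I}\abs{f-a}$ because $\abs{h_I-a}\cdot\abs{J} = \abs{\int_J(f-a)\cdot\operatorname{sgn}}$... — precisely, $\int_J(f-a)=0$ forces $\int_{J\cap\hat I}(a - f)=\int_{J\setminus\hat I}(f-a)$, hence $\abs{J\cap\hat I}\abs{h_I-a}=\abs{\int_{J\cap\hat I}(f-a)}=\abs{\int_{J\setminus\hat I}(f-a)}\le\int_{J\setminus\hat I}\abs{f-a}$. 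Then $\int_{J\setminus\hat I}\abs{f-a}\le\int_{J\setminus\hat I}f+\abs{J\setminus\hat I}a$ and $\abs{J\setminus\hat I}a\le\abs{J\setminus\hat I}\abs{J}^{-1}(\abs{J\cap\hat I}h_I+\int_{J\setminus\hat I}f)\le\abs{J\setminus\hat I}h_I+\int_{J\setminus\hat I}f$, so altogether $\int_J\abs{f-a}\le 4\int_{J\setminus\hat I}f + 2\abs{J\setminus\hat I}h_I$, whence $F(J)=\abs{J}^{1-p}(\int_J\abs{f-a})^p\le c_p[\abs{J}^{1-p}(\int_{J\setminus\hat I}f)^p+\abs{J}^{1-p}(\abs{J\setminus\hat I}h_I)^p]\le c_p[\abs{J}^{1-p}(\int_{J\setminus\hat I}f)^p+\abs{J\setminus\hat I}h_I^p]$, the last step using $\abs{J\setminus\hat I}\le\abs{J}$ and $\min(\abs{J\cap\hat I},\abs{J\setminus\hat I})=\abs{J\setminus\hat I}$. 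The first case is symmetric with the roles of $J\cap\hat I$ and $J\setminus\hat I$ interchanged in the identity $\abs{J\cap\hat I}\abs{h_I - a} = \abs{\int_{J\setminus\hat I}(f-a)}$, now using it as $\int_{J\cap\hat I}\abs{f-a}\le\int_{J\cap\hat I}\abs{h_I - a} + 0$ trivially while $\int_{J\setminus\hat I}\abs{f - a}\le\int_{J\setminus\hat I}f + \abs{J\setminus\hat I}a$ and $\abs{J\cap\hat I}\abs{h_I-a}\le\abs{J\cap\hat I}h_I$ plus $\abs{J\cap\hat I}a\le\abs{J\cap\hat I}\abs{J}^{-1}\int_J f$; combining and using $\min=\abs{J\cap\hat I}$ together with $\abs{J\cap\hat I}\le\abs{J}$ closes it. I expect the only real care needed is bookkeeping of which of $\abs{J\cap\hat I}$, $\abs{J\setminus\hat I}$ is the minimum and keeping the constant $c_p$ (depending only on $p$ through the power, e.g. $4^p$) explicit.
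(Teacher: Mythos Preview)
Your proof is correct, though the exposition is rather convoluted. After your false starts you arrive at two working arguments: in the case $\abs{J\setminus\hat I}\le\abs{J\cap\hat I}$ you use the mean-zero identity $\abs{J\cap\hat I}\,\abs{h_I-a}=\big|\int_{J\cap\hat I}(f-a)\big|=\big|\int_{J\setminus\hat I}(f-a)\big|\le\int_{J\setminus\hat I}\abs{f-a}$ to get $\int_J\abs{f-a}\le 2\int_{J\setminus\hat I}\abs{f-a}\le 4\int_{J\setminus\hat I}f+2\abs{J\setminus\hat I}h_I$; in the other case you use the triangle inequality $\abs{h_I-a}\le h_I+a$ and the exact formula for $a$ to get $\int_J\abs{f-a}\lesssim\int_{J\setminus\hat I}f+\abs{J\cap\hat I}h_I$. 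Both conclusions then feed into the claimed bound after raising to the $p$th power and using $\min(\abs{J\cap\hat I},\abs{J\setminus\hat I})\le\abs{J}$.

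The paper's proof avoids the case split entirely. It simply expands $a=\ave{f}_J=\abs{J}^{-1}\big(\int_{J\setminus\hat I}f+\abs{J\cap\hat I}h_I\big)$ and computes directly that
\[
\int_J\abs{f-a}\le 2\int_{J\setminus\hat I}f+2\,\frac{\abs{J\cap\hat I}\,\abs{J\setminus\hat I}}{\abs{J}}\,h_I,
\]
whence the second term, raised to the $p$th power and multiplied by $\abs{J}^{1-p}$, is at most $\min(\abs{J\cap\hat I},\abs{J\setminus\hat I})\,h_I^p$ because $\frac{\abs{J\cap\hat I}\,\abs{J\setminus\hat I}}{\abs{J}}\le\min(\abs{J\cap\hat I},\abs{J\setminus\hat I})\le\abs{J}$. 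This product bound is what replaces your two cases. Your mean-zero trick is a pleasant observation, but the unified computation is shorter and gives a slightly better constant.
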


\begin{proof}
Recall that $f=h_I$ on $\hat I$. Thus
\begin{equation*}
  \ave{f}_J
  =\frac{1}{\abs{J}}\int_{J\setminus\hat I}f+\frac{\abs{J\cap\hat I}}{\abs{J}}h_I
\end{equation*}
and
\begin{equation*}
\begin{split}
  \int_J\abs{f-\ave{f}_J}
  &=\int_{J\setminus\hat I}\abs{f-\ave{f}_J}+\int_{J\cap\hat I}\Babs{\frac{\abs{J\setminus\hat I}}{\abs{J}}h_I-\frac{1}{\abs{J}}\int_{J\setminus\hat I}f} \\
  &\leq \int_{J\setminus\hat I}f+\abs{J\setminus\hat I}\ave{f}_J+\frac{\abs{J\cap\hat I}\abs{J\setminus\hat I}}{\abs{J}}h_I
    +\frac{\abs{J\cap\hat I}}{\abs{J}}\int_{J\setminus\hat I}f \\
   &\leq 2\int_{J\setminus\hat I}f+2\frac{\abs{J\cap\hat I}\abs{J\setminus\hat I}}{\abs{J}}h_I.
\end{split}
\end{equation*}
Hence
\begin{equation*}
\begin{split}
  F(J) &\leq \abs{J}^{1-p}\Big(2\int_{J\setminus\hat I}f+2\frac{\abs{J\cap\hat I}\abs{J\setminus\hat I}}{\abs{J}}h_I\Big)^p \\
  &\leq c_p\Big[\abs{J}^{1-p}\Big(\int_{J\setminus\hat I}f\Big)^p+\min(\abs{J\cap\hat I},\abs{J\setminus\hat I})h_I^p\Big].\qedhere
\end{split}
\end{equation*}
\end{proof}

Let $J$ be an interval with $F(J)\neq 0$, and $I=I_J$. We say that $J$ is
\begin{itemize}
  \item short, if $\abs{J\setminus\hat I}<\delta_I$;
  \item medium, if $\delta_I\leq \abs{J\setminus\hat I}<2D_I$;
  \item long, if $\abs{J\setminus\hat I}\geq 2D_I$.
\end{itemize}

\begin{lemma}
If $J$ is short and $I=I_J$, then
\begin{equation*}
  F(J)\leq c_p d_I h_I^p = c_p 2^{-2i}
\end{equation*}
and
\begin{equation*}
  \sum_{\substack{J\in\mathscr J \\ J\text{ short}}}F(J)\leq c_p.
\end{equation*}
\end{lemma}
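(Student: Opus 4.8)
The plan is to show that for a short interval $J$ the ``outer part'' $J\setminus\hat I$ carries none of the mass of $f$, so that the first term in Lemma~\ref{lem:FJ} drops out, and then to feed the resulting pointwise bound into a crude counting estimate.

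First I would prove that if $J$ is short with $I=I_J$ and $\abs{I}=2^{-i}$, then $\int_{J\setminus\hat I}f=0$. Since $J$ is an interval meeting $\hat I$, every point of $J\setminus\hat I$ lies at distance $<\abs{J\setminus\hat I}<\delta_I$ from $\hat I$, so it suffices to show that every point of $\supp f$ lying outside $\hat I$ is at distance at least $\delta_I$ from $\hat I$. Now $\supp f=\bigcup_{I''\in\mathscr D}\hat{I''}$ is a disjoint union by Lemma~\ref{lemma:delta}: the pieces $\hat{I''}$ with $I''\subsetneq I$ lie at distance $\geq\delta_I$ from $\hat I$ directly by the definition of $\delta_I$ in that lemma (the two halves of $I$ playing symmetric roles by construction), while every remaining piece (those $\hat{I''}$ with $I''\not\subseteq I$, the nearest being $\hat{\tilde I}$ for the dyadic parent $\tilde I$ of $I$) is separated from $\hat I$ by at least $d_{i-1}=2^{-i^2}>2^{-(i+1)^2}=d_i\geq\delta_I$ — and, crucially, the maximality of $I=I_J$ keeps $J$ from crossing $\hat{\tilde I}$ to reach those pieces at all; when $\abs{I}=1$ there are no such pieces. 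Hence $(J\setminus\hat I)\cap\supp f=\varnothing$ and the integral vanishes.

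Granting this, Lemma~\ref{lem:FJ} gives at once
\begin{equation*}
  F(J)\leq c_p\min(\abs{J\cap\hat I},\abs{J\setminus\hat I})h_I^p\leq c_p\abs{J\setminus\hat I}h_I^p\leq c_p\delta_I h_I^p\leq c_p d_I h_I^p=c_p 2^{-2i},
\end{equation*}
using $\delta_I\leq d_I$ from Lemma~\ref{lemma:delta} and $d_ih_i^p=2^{-(i+1)^2}2^{i^2}=2^{-2i-1}$. For the sum I would organize the short intervals by the value of $I=I_J\in\mathscr D$: by the lemma on $I_J$ there are at most two $J\in\mathscr J$ with a prescribed $I_J$, and there are $2^i$ dyadic intervals $I$ of a given generation $i$, whence
\begin{equation*}
  \sum_{\substack{J\in\mathscr J\\ J\text{ short}}}F(J)\leq\sum_{i\geq0}2^i\cdot 2\cdot c_p 2^{-2i}=2c_p\sum_{i\geq0}2^{-i}=4c_p,
\end{equation*}
which is the claimed estimate after renaming $c_p$.

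The one place needing care is the vanishing of $\int_{J\setminus\hat I}f$: against the descendants of $I$ this is precisely what $\delta_I$ encodes (Lemma~\ref{lemma:delta}), so the real content is excluding the ancestors of $I$ and their off-branch descendants. That is a short bookkeeping argument on the tree of intervals $\hat I$, resting on the comparison $d_{i-1}>\delta_i$ together with the fact that $I=I_J$ is by definition the \emph{largest} $\hat I$ that $J$ meets, which confines $J\setminus\hat I$ to the two gaps immediately flanking $\hat I$. Everything after that is a one-line application of Lemma~\ref{lem:FJ} and a geometric series.
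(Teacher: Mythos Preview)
Your proof is correct and follows essentially the same route as the paper: show $f\equiv 0$ on $J\setminus\hat I$ so only the second term of Lemma~\ref{lem:FJ} survives, bound it by $c_p\delta_I h_I^p\leq c_p d_I h_I^p=c_p2^{-2i}$, and sum using the count $2^i$ intervals per level and at most two $J$ per $I$. Your treatment of the vanishing step is in fact more careful than the paper's one-line ``too small to reach the support''; note, though, that the detour through $d_{i-1}$ for non-descendants is unnecessary, since the maximality of $I=I_J$ already forces any $\hat{I''}$ meeting $J$ to satisfy $I''\subseteq I$, reducing everything to the descendant case handled by $\delta_I$.
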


\begin{proof}
In this case $J\setminus\hat I$ is too small to reach the support of $f$ outside $\hat I$, and hence $f=0$ on $J\setminus \hat I$. Thus only the second term from Lemma \ref{lem:FJ} contributes to $F(J)$. Thus
\begin{equation*}
  F(J)\leq c_p\abs{J\setminus\hat I}h_I^p
  \leq c_p\delta_I h_I^p\leq c_p d_I h_I^p
  =c_p 2^{-(i+1)^2}2^{i^2}=c_p 2^{-2i-1}.
\end{equation*}
Now
\begin{equation*}
  \sum_{\substack{J\in\mathscr J \\ J\text{ short}}}F(J)
  =\sum_{I\in\mathscr D}\sum_{\substack{J\in\mathscr J\text{ short} \\ I_J=I}}F(J)
  \leq \sum_{i=0}^\infty 2^i\cdot 2\cdot c_p 2^{-2i}=c_p.
\end{equation*}
since there are:
\begin{itemize}
  \item exactly $2^i$ intervals of $I\in\mathscr D$ of length $2^{-i}$;
  \item for each $I$, at most two short intervals $J\in\mathscr J$ with $I_J=I$; and
  \item for each $J$, we have the estimate for $F(J)$ as written.
\end{itemize}
\end{proof}

\begin{lemma}
If $J$ is medium and $I=I_J$, then
\begin{equation*}
  F(J)\leq c_p(2^{-ip}+2^{-2i})
\end{equation*}
and
\begin{equation*}
  \sum_{\substack{J\in\mathscr J \\ J\text{ medium}}}F(J)\leq c_p.
\end{equation*}
\end{lemma}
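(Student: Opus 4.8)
The plan is to feed the interval $J$ (medium, with $I=I_J$, $\abs I=2^{-i}$) into Lemma~\ref{lem:FJ} and estimate its two terms separately, then sum. The second term is immediate: since $J$ is medium, $\abs{J\setminus\hat I}<2D_I\leq 6d_i$ by Lemma~\ref{lemma:delta}, so
\[
  \min(\abs{J\cap\hat I},\abs{J\setminus\hat I})\,h_I^p\leq\abs{J\setminus\hat I}\,h_i^p\leq 6d_ih_i^p=6\cdot 2^{-(i+1)^2}2^{i^2}=3\cdot 2^{-2i},
\]
which already accounts for the $2^{-2i}$ in the claim.

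For the first term I would first record a structural fact: \emph{every hat $\hat{I''}$ meeting $J$ has $I''\subseteq I$}. Indeed, $I''\supsetneq I$ contradicts the maximality of $I=I_J$ at once; and if $I''$ and $I$ are disjoint dyadic intervals, the hat of the smallest dyadic $\tilde I$ containing both lies \emph{between} $\hat I$ and $\hat{I''}$ (this is exactly the geometry used in the last paragraph of the proof of Lemma~\ref{lemma:delta}), so the interval $J$, which meets both $\hat I$ and $\hat{I''}$, must also meet $\hat{\tilde I}$ --- again contradicting maximality since $\abs{\tilde I}>\abs I$. Hence $f$ on $J\setminus\hat I$ is made up only of contributions of strict dyadic descendants of $I$, and
\[
  \int_{J\setminus\hat I}f=\sum_{\substack{I''\subsetneq I\\\hat{I''}\cap J\neq\varnothing}}h_{I''}\abs{\hat{I''}\cap J}\leq\Norm{f_I^*}{1}-\Norm{f_I}{1}\leq c_p\,2^{-i}2^{-(i+1)^2/p'},
\]
the last inequality coming from the computation of $\Norm{f_I^*}{1}$ above: remove the $j=i$ term and bound $\sum_{k\geq 0}2^{-(i+1+k)^2/p'}\leq 2^{-(i+1)^2/p'}\sum_{k\geq 0}2^{-k^2/p'}$. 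Since $J$ is medium, $\abs J\geq\abs{J\setminus\hat I}\geq\delta_I\geq\tfrac12 d_i=2^{-(i+1)^2-1}$, so $\abs J^{1-p}\leq 2^{p-1}2^{(i+1)^2(p-1)}$; multiplying and using $p/p'=p-1$ gives $\abs J^{1-p}\bigl(\int_{J\setminus\hat I}f\bigr)^p\leq c_p\,2^{-ip}$. Combining the two term estimates via Lemma~\ref{lem:FJ} yields $F(J)\leq c_p(2^{-ip}+2^{-2i})$.

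For the summability I would argue exactly as in the short case: there are $2^i$ dyadic intervals $I$ of length $2^{-i}$, and by the lemma guaranteeing that each $I\in\mathscr D$ equals $I_J$ for at most two $J\in\mathscr J$, we get
\[
  \sum_{\substack{J\in\mathscr J\\J\text{ medium}}}F(J)\leq\sum_{i\geq 0}2^i\cdot 2\cdot c_p(2^{-ip}+2^{-2i})=2c_p\sum_{i\geq 0}(2^{-i(p-1)}+2^{-i})\leq c_p,
\]
the geometric series converging precisely because $p>1$.

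The only genuinely new ingredient is the structural fact that $J\setminus\hat I$ picks up contributions from descendants of $I=I_J$ only; after that the argument is pure bookkeeping with the explicit sequences $\ell_i,d_i,h_i$, entirely parallel to the short-interval lemma. I do not anticipate any obstacle beyond keeping the exponents straight --- in particular the identity $p/p'=p-1$, which is what makes the first term collapse to the clean bound $2^{-ip}$.
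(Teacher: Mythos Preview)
Your proof is correct and matches the paper's approach essentially line for line: the same bound on the second term via $\abs{J\setminus\hat I}<2D_I\leq 6d_i$, the same observation that $J\setminus\hat I$ meets only hats of proper descendants of $I$ (the paper phrases this as $\int_{J\setminus\hat I}f\leq\sum_{u=1,2}\Norm{f_{I_u}^*}{1}$, which is exactly your $\Norm{f_I^*}{1}-\Norm{f_I}{1}$), the same lower bound $\abs{J}\geq\delta_I\geq\tfrac12 d_i$, and the same summation. Your explicit justification of the structural fact is a nice touch; the paper defers that observation to the long-interval lemma.
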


\begin{proof}
In this case, we need to consider both terms from Lemma \ref{lem:FJ}. However, the estimate for the second term is exactly as in the case of short intervals, since $2D_I\leq 6d_I$, and we concentrate on the first term. Letting $I_1,I_2$ be the two halves of $I$, we have
\begin{equation*}
  \int_{J\setminus\hat I}f
  \leq \sum_{u=1,2}\Norm{f_{I_u}^*}{1}
  \leq \sum_{u=1,2}c_p 2^{-(i+1)^2/p'-(i+1)}
\end{equation*}
Since $\abs{J}\geq\abs{J\setminus\hat I}\geq \delta_I\geq\frac12 d_I$ and $p/p'=p-1$, we get
\begin{equation*}
\begin{split}
  \abs{J}^{1-p}\Big(\int_{J\setminus\hat I}f\Big)^p
  &\leq c_p d_I^{1-p}   (2^{-(i+1)^2/p'-(i+1)})^p \\  
    &=c_p 2^{-(i+1)^2(1-p)}2^{-(i+1)^2(p-1)-(i+1)p}
  =c_p 2^{-ip}.
\end{split}
\end{equation*}
Taking into account the second term from Lemma \ref{lem:FJ}, which is the same as in the short case, we get
\begin{equation*}
  F(J)\leq c_p(2^{-ip}+2^{-2i})
\end{equation*}
As in the short case, this gives
\begin{equation*}
  \sum_{\substack{J\in\mathscr J \\ J\text{ medium}}}F(J)
  =\sum_{I\in\mathscr D}\sum_{\substack{J\in\mathscr J\text{ medium} \\ I_J=I}}F(J)
  \leq \sum_{i=0}^\infty 2^i\cdot 2\cdot c_p (2^{-pi}+2^{-2i})=c_p,
\end{equation*}
since $p>1$.
\end{proof}

\begin{lemma}
If $J$ is long and $I=I_J$, then
\begin{equation*}
  F(J)\leq c_p 2^{-i}=c_p\abs{I}.
\end{equation*}
The corresponding intervals $I$ form a Carleson family, in the sense that
\begin{equation*}
  \sum_{\substack{I\in\mathscr D \\ \exists J\text{ long: }I=I_J}}\abs{I}\leq 2,
\end{equation*}
and hence
\begin{equation*}
  \sum_{\substack{J\in\mathscr J \\ J\text{ long}}}F(J)\leq c_p.
\end{equation*}
\end{lemma}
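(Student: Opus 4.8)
The plan is to prove the three assertions in order: the pointwise bound $F(J)\le c_p\abs I$, then the packing estimate $\sum\abs I\le 2$, and finally the bound on $\sum_{J\text{ long}}F(J)$, which will be immediate from the first two together with the fact (proved in an earlier lemma) that each $I\in\mathscr D$ serves as $I_J$ for at most two $J\in\mathscr J$.

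For the pointwise bound I would run essentially the same computation as in the medium case, via Lemma \ref{lem:FJ}. The first term is estimated exactly as there: $J\setminus\hat I$ meets only the hats $\hat{I''}$ with $I''\subsetneq I$, so $\int_{J\setminus\hat I}f\le\Norm{f_{I_1}^*}{1}+\Norm{f_{I_2}^*}{1}\le c_p2^{-(i+1)^2/p'-(i+1)}$ with $I_1,I_2$ the halves of $I$; since $\abs J\ge\abs{J\setminus\hat I}\ge 2D_I\ge d_i$ and $p/p'=p-1$, this term contributes $\le c_p2^{-(i+1)^2(1-p)}\big(2^{-(i+1)^2/p'-(i+1)}\big)^p=c_p2^{-(i+1)p}$. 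The only difference from the medium case is the second term, which here I would bound crudely by $\min(\abs{J\cap\hat I},\abs{J\setminus\hat I})h_I^p\le\abs{\hat I}h_I^p=\ell_ih_I^p=2^{-(i+1/2)^2}2^{i^2}=2^{-i-1/4}$. Since $p>1$, adding the two gives $F(J)\le c_p2^{-i}=c_p\abs I$.

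The heart of the argument is the packing estimate, and the key geometric observation is that a \emph{long} $J$ must contain the whole part of $\supp f$ lying on one side of $\hat I$. Writing $J\setminus\hat I$ as the union of its left and right portions, one of them — say the right portion $R$ — has length $\ge D_I$, because $\abs{J\setminus\hat I}\ge 2D_I$; as $J$ meets $\partial\hat I$, the set $R$ is an interval abutting $\hat I$ on the right, so it contains every point within distance $D_I$ to the right of $\hat I$, and hence, by the definition of $D_I$ in Lemma \ref{lemma:delta} (taken with the right half of $I$ as the relevant half), $\supp f_{I'}^*\subseteq R\subseteq J$, where $I'$ is that right half. Write $c_J$ for this half. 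Then for any $I''\in\mathscr D$ with $I''\subseteq c_J$ we have $\hat{I''}\subseteq\supp f_{c_J}^*\subseteq J$, so $I''$ cannot equal $I_{J'}$ for any long $J'$: such a $J'$ would meet $\hat{I''}$, hence meet $J$, contradicting the pairwise disjointness of $\mathscr J$ (and $J'\ne J$ since $I_{J'}=I''\subsetneq I=I_J$).

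From this the packing bound follows by a bookkeeping argument on the dyadic tree. For each $I$ in the family $\{I_J:J\text{ long}\}$ fix one long witness and let $c(I)$ be the corresponding half of $I$, so $\abs{c(I)}=\tfrac12\abs I$; by the previous paragraph the entire dyadic subtree rooted at $c(I)$ (including $c(I)$ itself) misses the family. Hence the intervals $c(I)$ are pairwise disjoint: for two disjoint family members this is clear since $c(I)\subseteq I$, and if one family member lies strictly inside another, the smaller one cannot lie in the subtree of $c(\text{larger})$, so it lies under the other half of the larger interval and is therefore disjoint from $c(\text{larger})$. Since the $c(I)$ are disjoint subsets of $[0,1)$,
\[
  \sum_{I}\abs I=2\sum_{I}\abs{c(I)}\le 2\abs{[0,1)}=2,
\]
and therefore $\sum_{J\text{ long}}F(J)\le 2c_p\sum_I\abs I\le c_p$. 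The only genuinely delicate point is the geometric claim that a long $J$ swallows $\supp f_{c_J}^*$ — this is where the estimates $\tfrac12 d_i\le\delta_i\le d_i\le D_i\le 3d_i$ from Lemma \ref{lemma:delta} and the sidedness convention for $\hat I_1,\hat I_2$ are used; once that is in hand, the rest is routine, being a repetition of the medium-case computation for $F(J)$ and an elementary disjointness count on the dyadic tree.
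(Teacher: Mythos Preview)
Your proof is correct and follows essentially the same approach as the paper: the pointwise bound reuses the medium-case estimate for the first term of Lemma~\ref{lem:FJ} and bounds the second term by $\ell_I h_I^p$, while the packing argument rests on the same geometric observation that a long $J$ swallows all the $\hat{I''}$ on one side of $\hat I$, giving a family of disjoint half-intervals. The paper presents the Carleson estimate first and the pointwise bound second, but the content is the same.
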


\begin{proof}
If $J$ is long, then $\abs{J\setminus \hat I}> 2D_I$, and hence the part of $J$ on at least one side, say left (right), of $\hat I$ is longer than $D_I$. But this means that $J$ contains all the intervals $\hat I'$, where $I'\subsetneq I$ is contained in the left (right) half of $I$. Consequently, all dyadic subintervals $I'\subsetneq I$ of the form $I_{J'}$ for some $J'\in\mathscr J$, must be contained in just one half of $I$. Let us say that $I\in\mathscr D$ is long, if it is of the form $I_J$ for some long $J$. Hence, if $I\in\mathscr D$ is long, all its long subintervals are contained in just one half of $I$. If $I^*$ denotes the other half of $I$, then clearly $\abs{I^*}=\frac12\abs{I}$, and the intervals $I^*$ are pairwise disjoint for long intervals $I$. This immediately implies the Carleson property by
\begin{equation*}
  \sum_{I\in\mathscr D\text{ long}}\abs{I}
  =2\sum_{I\in\mathscr D\text{ long}}\abs{I^*}
  =2\Babs{\bigcup_{I\in\mathscr D\text{ long}}I^*}
  \leq 2\abs{[0,1)}=2.
\end{equation*}

As for the estimate for $F(J)$, we note that the part corresponding to the first term of Lemma \ref{lem:FJ} is just the same as in the medium case: by the maximality of $\hat I$, we know that $J\setminus \hat I$ cannot meet any other parts of $f$ than $f_{I'}^*$, where $I'$ is the left or right half of $I$, and in addition to this observation, we only used that $\abs{J\setminus I}$ is big enough. And of course $c_p 2^{-ip}\leq c_p 2^{-i}$.

We turn to the second term in Lemma \ref{lem:FJ}, which is estimated slightly differently from the previous cases. Namely, in the lack of a good bound for $\abs{J\setminus\hat I}$, we instead observe that $\abs{J\cap\hat I}\leq\abs{\hat I}=\ell_I$, and hence
\begin{equation*}
  c_p\min(\abs{J\cap\hat I},\abs{J\setminus\hat I})h_I^p
  \leq c_p\ell_I h_I^p
  =c_p 2^{-(i+1/2)^2}2^{i^2}=c_p 2^{-i}=c_p\abs{I},
\end{equation*}
as claimed. This would not be good enough to sum over all dyadic intervals, but instead the Carleson property comes to rescue:
\begin{equation*}
\begin{split}
  \sum_{J\in\mathscr J\text{ long}}F(J)
  \leq  \sum_{I\in\mathscr D\text{ long}} \sum_{\substack{ J\in\mathscr J\text{ long} \\ I_J=I}}F(J)
  \leq \sum_{I\in\mathscr D\text{ long}} c_p\abs{I}\leq c_p.
\end{split}
\end{equation*}
\end{proof}

\begin{proof}[Proof of Proposition \ref{prop:badf}]
Let $f$ be the function discussed above. We already checked that $\Norm{f}{p}=\infty$.

On the other hand, if $\mathscr J$ is a disjoint family of intervals, we checked that
\begin{equation*}
  \sum_{J\in\mathscr J}F(J)
  =\sum_{\substack{ J\in\mathscr J \\ J\text{ short}}}F(J)
    +\sum_{\substack{ J\in\mathscr J \\ J\text{ medium}}}F(J)
    +\sum_{\substack{ J\in\mathscr J \\ J\text{ long}}}F(J)
  \leq c_p+c_p+c_p.
\end{equation*}
By definition, this shows that
\begin{equation*}
  \Norm{f}{JN_p}\leq c_p.\qedhere
\end{equation*}
\end{proof}

\begin{remark}\label{rem:Lpq}
In fact, the same function $f$ also satisfies $\Norm{f}{L^{p,q}}=\infty$ for every $q<\infty$, proving that $JN_p\not\subset L^{p,q}$. This is seen as follows:

Recall that $f$ takes the value $h_i=2^{i^2/p}$ on $2^i$ disjoint intervals of length $\ell_i=2^{-(i+1/2)^2}$. Thus
\begin{equation*}
  \abs{\{f=h_i=2^{i^2/p}\}}=2^i\ell_i=2^i 2^{-i^2-i-1/4}=2^{-i^2-1/4}.
\end{equation*}
The norm in the Lorentz space $L^{p,q}$ is given by
\begin{equation*}
\begin{split}
  \Norm{f}{L^{p,q}}^q &= \int_0^\infty \big(t\abs{\{f> t\}}^{1/p}\big)^q\frac{\ud t}{t} \\
  &\geq \sum_{i=0}^\infty \int_{h_i}^{h_{i+1}} t^{q-1}\ud t\cdot  \abs{\{f= h_{i+1}\}}^{q/p} \\
  &\gtrsim \sum_{i=0}^\infty (h_{i+1}^q-h_i^q)\cdot 2^{-(i+1)^2 q/p} \\
  &=\sum_{i=0}^\infty (2^{(i+1)^2 q/p}-2^{i^2 q/p})\cdot 2^{-(i+1)^2 q/p} \\
  &=\sum_{i=0}^\infty (1-2^{-(2i+1)q/p}).
\end{split}
\end{equation*}
Since the $i$th term of the series converges to $1$ (and not $0$) as $i\to\infty$, the series is clearly not summable.
\end{remark}

\section{A multidimensional counterexample} 

In this short section we show how to lift the one-dimensional counterexample to several variables. It turns out that this can be achieved in a soft way by simply using the previous result as a black box, without revisiting any of the technical details. This is thanks to the following simple extension result that might have some independent interest:

\begin{proposition}\label{prop:constExtJNp}
Let $Q_0\subset\R^d$ be a cube, $f\in L^1(Q_0)$, and $\tilde f(x,t):=f(x)$ its trivial extension to $(x,t)\in \tilde Q_0:=Q_0\times[0,\ell(Q_0))\subset\R^{d+1}$. Then $f\in JN_p(Q_0)$ if and only if $\tilde f\in JN_p(\tilde Q_0)$, and
\begin{equation*}
  2^{-1/p}\Norm{f}{JN_p(Q_0)}\ell(Q_0)^{1/p}\leq\Norm{\tilde f}{JN_p(\tilde Q_0)}\leq\Norm{f}{JN_p(Q_0)}\ell(Q_0)^{1/p}.
\end{equation*}
\end{proposition}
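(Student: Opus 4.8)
The plan is to reduce the claim to two elementary facts about cubes in $\R^{d+1}$. First, every (axis-parallel) cube $R\subseteq\tilde Q_0$ factors as $R=Q\times J$ with $Q\subseteq Q_0$ a cube in $\R^d$ and $J\subseteq[0,\ell(Q_0))$ an interval, where necessarily $\abs J=\ell(J)=\ell(Q)$, so $\abs R=\abs Q\,\abs J=\abs Q\,\ell(Q)$; and since $\tilde f(x,t)=f(x)$ is independent of $t$ we have $\ave{\tilde f}_R=\ave f_Q$ and $\fint_R\abs{\tilde f-\ave{\tilde f}_R}=\fint_Q\abs{f-\ave f_Q}$, whence
\[
  \abs R\Big(\fint_R\abs{\tilde f-\ave{\tilde f}_R}\Big)^p=\abs J\cdot\abs Q\Big(\fint_Q\abs{f-\ave f_Q}\Big)^p .
\]
Second, $\Norm{\tilde f}{L^1(\tilde Q_0)}=\ell(Q_0)\Norm{f}{L^1(Q_0)}$, so the stated equivalence of membership is immediate once the two norm inequalities are proved (recall $0<\ell(Q_0)<\infty$).

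For the upper bound I would take an arbitrary pairwise disjoint family $\{R_i\}=\{Q_i\times J_i\}$ in $\tilde Q_0$ and slice it horizontally. For each fixed $t\in[0,\ell(Q_0))$, if $t\in J_i\cap J_j$ and $Q_i\cap Q_j\neq\varnothing$ then $R_i\cap R_j\neq\varnothing$; hence $\{Q_i:t\in J_i\}$ is a pairwise disjoint family of subcubes of $Q_0$, so $\sum_{i:\,t\in J_i}\abs{Q_i}\big(\fint_{Q_i}\abs{f-\ave f_{Q_i}}\big)^p\le\Norm{f}{JN_p(Q_0)}^p$. Integrating this inequality over $t\in[0,\ell(Q_0))$, applying Tonelli and $\int_0^{\ell(Q_0)}1_{J_i}\ud t=\abs{J_i}$, and using the identity above, gives $\sum_i\abs{R_i}\big(\fint_{R_i}\abs{\tilde f-\ave{\tilde f}_{R_i}}\big)^p\le\ell(Q_0)\Norm{f}{JN_p(Q_0)}^p$; taking the supremum over $\{R_i\}$ and a $p$-th root yields $\Norm{\tilde f}{JN_p(\tilde Q_0)}\le\ell(Q_0)^{1/p}\Norm{f}{JN_p(Q_0)}$.

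For the lower bound I would argue in the opposite direction, producing a test family in $\tilde Q_0$ from a given pairwise disjoint family $\{Q_i\}$ in $Q_0$ by vertical stacking. Set $n_i:=\floor{\ell(Q_0)/\ell(Q_i)}$; since $Q_i\subseteq Q_0$ we have $\ell(Q_0)/\ell(Q_i)\ge1$, hence $n_i\ge\tfrac12\ell(Q_0)/\ell(Q_i)$ because $\floor x\ge x/2$ for $x\ge1$. Define $R_i^{(k)}:=Q_i\times[(k-1)\ell(Q_i),\,k\ell(Q_i))$ for $k=1,\dots,n_i$: these lie in $\tilde Q_0$ since $n_i\ell(Q_i)\le\ell(Q_0)$, and $\{R_i^{(k)}\}_{i,k}$ is pairwise disjoint (for equal $i$ the $t$-intervals are disjoint, for distinct $i$ the projections $Q_i$ are disjoint). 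Using the identity from the first paragraph,
\begin{equation*}
\begin{split}
  \sum_{i,k}\abs{R_i^{(k)}}\Big(\fint_{R_i^{(k)}}\abs{\tilde f-\ave{\tilde f}_{R_i^{(k)}}}\Big)^p
  &=\sum_i n_i\,\ell(Q_i)\,\abs{Q_i}\Big(\fint_{Q_i}\abs{f-\ave f_{Q_i}}\Big)^p \\
  &\ge\tfrac12\,\ell(Q_0)\sum_i\abs{Q_i}\Big(\fint_{Q_i}\abs{f-\ave f_{Q_i}}\Big)^p ,
\end{split}
\end{equation*}
and taking the supremum over $\{Q_i\}$ and a $p$-th root gives $\Norm{\tilde f}{JN_p(\tilde Q_0)}\ge2^{-1/p}\ell(Q_0)^{1/p}\Norm{f}{JN_p(Q_0)}$. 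Together with the upper bound this proves the displayed two-sided estimate, and hence also the equivalence of membership.

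The proof is essentially bookkeeping, and the only point requiring care --- the ``main obstacle'', such as it is --- is that disjointness of $\{R_i\}$ in $\R^{d+1}$ does \emph{not} imply disjointness of the projections $\{Q_i\}$ in $\R^d$ (vertically stacked cubes share a common projection). This is exactly why a naive projection argument fails and why the correct devices are slicing in the $t$-variable for the upper bound and explicit vertical stacking for the lower bound; the constant $2^{-1/p}$ is an unavoidable by-product of the floor function in $n_i$ and is immaterial for the application.
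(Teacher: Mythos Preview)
Your proof is correct and essentially identical to the paper's own argument: the paper uses exactly the same horizontal slicing (integrating over $t$ and using that $\{Q_i:t\in I_i\}$ is disjoint) for the upper bound, and exactly the same vertical stacking with $N_i=\floor{\ell(Q_0)/\ell(Q_i)}$ copies and the bound $N_i\ell(Q_i)\ge\tfrac12\ell(Q_0)$ for the lower bound. Even your closing remark about why naive projection fails matches the paper's implicit reasoning.
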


\begin{proof}
Let first $f\in JN_p(Q_0)$, and let $\tilde Q_i=Q_i\times I_i$ be any disjoint subcubes of $\tilde Q_0$. Since $\tilde f$ is constant in the $t$-direction, we get $\ave{\tilde f}_{\tilde Q_i}=\ave{f}_{Q_i}$ and further
\begin{equation*}
\begin{split}
  \sum_i\abs{\tilde Q_i}\Big(\fint_{\tilde Q_i}\abs{\tilde f-\ave{\tilde f}_{\tilde Q_i}}\Big)^p
  &=\sum_i\abs{Q_i}\ell(Q_i)\Big(\fint_{Q_i}\abs{f-\ave{f}_{Q_i}}\Big)^p \\
  &=\sum_i\abs{Q_i}\int_0^{\ell(Q_0)}1_{I_i}(t)\ud t\Big(\fint_{Q_i}\abs{f-\ave{f}_{Q_i}}\Big)^p \\  
  &=\int_0^{\ell(Q_0)}\sum_{i:t\in I_i}\abs{Q_i}\Big(\fint_{Q_i}\abs{f-\ave{f}_{Q_i}}\Big)^p \ud t.
\end{split}
\end{equation*}
For every fixed $t$, the collection of cubes $Q_i$, such that $t\in I_i$, is disjoint. (In fact, if $t\in I_i\cap I_j$ and $x\in Q_i\cap Q_j$, then $(x,t)\in (Q_i\times I_i)\cap(Q_j\times I_j)=\tilde Q_i\cap \tilde Q_j$, a contradiction with the disjointness of the cubes $\tilde Q_i$.)
Thus
\begin{equation*}
  \sum_{i:t\in I_i}\abs{Q_i}\Big(\fint_{Q_i}\abs{f-\ave{f}_{Q_i}}\Big)^p\leq\Norm{f}{JN_p(Q_0)}^p
\end{equation*}
and hence
\begin{equation*}
  \sum_i\abs{\tilde Q_i}\Big(\fint_{\tilde Q_i}\abs{\tilde f-\ave{\tilde f}_{\tilde Q_i}}\Big)^p
  \leq\int_0^{\ell(Q_0)}\Norm{f}{JN_p(Q_0)}^p\ud t=\Norm{f}{JN_p(Q_0)}^p\ell(Q_0).
\end{equation*}
Since this is true for every collection of disjoint subcubes $\tilde Q_i\subset \tilde Q_0$, we get the second claimed bound.

Let then $\tilde f\in JN_p(\tilde Q_0)$, and let $Q_i\subset Q_0$ be disjoint subcubes. We consider the cubes $\tilde Q_{i,n}:=Q_i\times[(n-1)\ell(Q_i),n\ell(Q_i))$, where $1\leq n\leq N_i$, and $N_i$ is chosen so that $N_i\ell(Q_i)\leq\ell(Q_0)<(N_i+1)\ell(Q_i)$. These are disjoint subcubes of $\tilde Q_0$, and
\begin{equation*}
  N_i\ell(Q_i)\geq\max\{\ell(Q_i),\ell(Q_0)-\ell(Q_i)\}\geq\frac12\ell(Q_0).
\end{equation*}
Hence
\begin{equation*}
\begin{split}
  \Norm{\tilde f}{JN_p(\tilde Q_0)}^p
  &\geq\sum_i\sum_{n=1}^{N_i}\abs{\tilde Q_{i,n}}\Big(\fint_{\tilde Q_{i,n}}\abs{\tilde f-\ave{\tilde f}_{\tilde Q_{i,n}}}\Big)^p \\
  &=\sum_i\sum_{n=1}^{N_i}\abs{Q_i}\ell(Q_i)\Big(\fint_{Q_{i}}\abs{f-\ave{f}_{Q_{i}}}\Big)^p \\
%  &=\sum_i\abs{Q_i}N_i\ell(Q_i)\Big(\fint_{Q_{i}}\abs{f-\ave{f}_{Q_{i}}}\Big)^p \\
  &\geq\frac{\ell(Q_0)}{2}\sum_i\abs{Q_i}\Big(\fint_{Q_{i}}\abs{f-\ave{f}_{Q_{i}}}\Big)^p,
\end{split}
\end{equation*}
where we used $\sum_{n=1}^{N_i}\ell(Q_i)=N_i\ell(Q_i)\geq\frac12\ell(Q_0).$
Since this holds for all disjoint collections of cubes $Q_i\subset Q_0$, we deduce the first claimed bound.
\end{proof}

\begin{corollary}
For every integer $d\geq 1$, there is a function $$f\in JN_p([0,1)^d)\setminus L^p([0,1)^d).$$
\end{corollary}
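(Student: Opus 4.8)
The plan is to induct on the dimension $d$, taking Proposition \ref{prop:badf} as the base case and using Proposition \ref{prop:constExtJNp} as a black box for the inductive step; no new analysis will be required.

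For the base case $d=1$, I would first note that the function $f$ produced in Proposition \ref{prop:badf} is supported in a bounded interval (of length $\ell_0+2D_0$, by Lemma \ref{lemma:delta}) and that the estimate established there bounds the supremum in the definition of the $JN_p$ norm over \emph{all} disjoint families of intervals, so $f\in JN_p(\R)\setminus L^p(\R)$. Restricting the defining supremum to families contained in the support interval only decreases the $JN_p$ norm, while $f\notin L^p$ of that interval because $f$ vanishes outside it. Composing with an affine map that carries the support interval onto $[0,1)$ --- under which the $JN_p$ norm transforms by a fixed power of the scaling factor, by the same dilation homogeneity already used in the proof of Theorem \ref{theorem:monotone}, and the $L^p$ norm likewise --- produces $f_1\in JN_p([0,1))\setminus L^p([0,1))$.

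For the inductive step, assume $f_d\in JN_p([0,1)^d)\setminus L^p([0,1)^d)$ has been constructed. Since $\ell([0,1)^d)=1$, applying Proposition \ref{prop:constExtJNp} with $Q_0=[0,1)^d$ gives $\tilde Q_0=[0,1)^d\times[0,1)=[0,1)^{d+1}$ and shows that $f_{d+1}(x,t):=f_d(x)$ satisfies $\Norm{f_{d+1}}{JN_p([0,1)^{d+1})}\leq\Norm{f_d}{JN_p([0,1)^d)}<\infty$. On the other hand, Fubini's theorem gives
\[
  \Norm{f_{d+1}}{L^p([0,1)^{d+1})}^p=\int_0^1\int_{[0,1)^d}\abs{f_d(x)}^p\ud x\ud t=\Norm{f_d}{L^p([0,1)^d)}^p=\infty,
\]
so $f_{d+1}\in JN_p([0,1)^{d+1})\setminus L^p([0,1)^{d+1})$, closing the induction.

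I do not expect a genuine obstacle here: Proposition \ref{prop:constExtJNp} does all the work, and the only mild point to check is the base-case transplantation of the concrete counterexample onto the unit interval, which costs nothing because both relevant norms behave well under affine rescaling.
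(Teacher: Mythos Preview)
Your proposal is correct and follows exactly the paper's approach: induction on $d$, with the base case obtained from Proposition~\ref{prop:badf} after an affine rescaling onto $[0,1)$, and the inductive step by a direct application of Proposition~\ref{prop:constExtJNp} together with Fubini for the $L^p$ norm. Your write-up is in fact slightly more detailed than the paper's (you spell out why the rescaling is harmless and the Fubini computation), but the argument is the same.
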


\begin{proof}
We have previously constructed such a function when $d=1$, with support in an interval of finite length. By rescaling, we can assume this function is supported in 
the interval $[0,1)$.  Suppose that the claim is true for some $d$, and let $f$ be the corresponding function. Let us then consider its trivial extension $\tilde f(x,t)=f(x)$ for $(x,t)\in[0,1)^d\times[0,1)$. By Proposition \ref{prop:constExtJNp}, we have $\Norm{\tilde f}{JN_p([0,1)^{d+1})}\leq\Norm{f}{JN_p([0,1)^d)}<\infty$, whereas clearly $\Norm{\tilde f}{L^p([0,1)^{d+1})}=\Norm{f}{L^p([0,1)^d)}=\infty$.
\end{proof}

\section{Equivalent norms}\label{sec:complements}

It might occur to one to consider the following generalisation of the $JN_p$ norm:
\begin{equation*}
  \Norm{f}{JN_{p,q}}^p
  :=\sup \sum_i \abs{Q_i}\Big(\fint_{Q_i}\abs{f-\ave{f}_{Q_i}}^q \Big)^{p/q},
\end{equation*}
where the supremum is as in the definition of $JN_p$. However, this would not yield anything new, as shown by the following:

\begin{proposition}
Let $Q\subset\R^d$ be a cube. Then
\begin{equation*}
  JN_{p,q}(Q)=\begin{cases} JN_p(Q), & 1\leq q<p, \\ L^q(Q), & p\leq q<\infty. \end{cases}
\end{equation*}
\end{proposition}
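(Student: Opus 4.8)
The plan is to split according to whether $q\geq p$ or $q<p$, observing at the outset that the one-element collection $\{Q\}$ is itself admissible in the supremum defining $\Norm{f}{JN_{p,q}(Q)}$, which already does half of the work for free.

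First consider $p\leq q<\infty$. The inclusion $JN_{p,q}(Q)\subseteq L^q(Q)$ is immediate from the trivial collection, since
\begin{equation*}
  \Norm{f}{JN_{p,q}(Q)}^p\geq\abs{Q}\Big(\fint_Q\abs{f-\ave f_Q}^q\Big)^{p/q}=\abs{Q}^{1-p/q}\Norm{f-\ave f_Q}{L^q(Q)}^p,
\end{equation*}
so that $f-\ave f_Q\in L^q(Q)$ and hence, $Q$ being bounded, $f\in L^q(Q)$. For the converse $L^q(Q)\subseteq JN_{p,q}(Q)$, let $f\in L^q(Q)$ and let $\{Q_i\}$ be a disjoint family in $Q$. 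The triangle inequality and Jensen's inequality give $(\fint_{Q_i}\abs{f-\ave f_{Q_i}}^q)^{1/q}\leq 2\abs{Q_i}^{-1/q}\Norm{f}{L^q(Q_i)}$, so
\begin{equation*}
  \sum_i\abs{Q_i}\Big(\fint_{Q_i}\abs{f-\ave f_{Q_i}}^q\Big)^{p/q}
  \leq 2^p\sum_i\abs{Q_i}^{1-p/q}\Big(\int_{Q_i}\abs{f}^q\Big)^{p/q}.
\end{equation*}
Applying H\"older's inequality in the sum over $i$ with exponents $\tfrac{q}{q-p}$ and $\tfrac{q}{p}$ (the case $q=p$ is a direct estimate needing no H\"older), and using $\sum_i\abs{Q_i}\leq\abs{Q}$ and $\sum_i\int_{Q_i}\abs{f}^q\leq\int_Q\abs{f}^q$, the right-hand side is at most $2^p\abs{Q}^{(q-p)/q}\Norm{f}{L^q(Q)}^p$. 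Taking the supremum over $\{Q_i\}$ completes this case, and the two bounds together give equivalence of the norms.

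Now consider $1\leq q<p$. The inclusion $JN_{p,q}(Q)\subseteq JN_p(Q)$ follows immediately from the termwise Jensen bound $\fint_{Q_i}\abs{f-\ave f_{Q_i}}\leq(\fint_{Q_i}\abs{f-\ave f_{Q_i}}^q)^{1/q}$. The substance is the reverse inclusion $JN_p(Q)\subseteq JN_{p,q}(Q)$, and here I would invoke the known John--Nirenberg inequality for $JN_p$, namely $\Norm{g-\ave g_{Q_0}}{L^{p,\infty}(Q_0)}\lesssim_p\Norm{g}{JN_p(Q_0)}$ on an arbitrary cube $Q_0$. Given $f\in JN_p(Q)$ and a disjoint family $\{Q_i\}$ in $Q$, combining this inequality on $Q_i$ with the elementary embedding of weak-$L^p$ into $L^q$ on the finite-measure cube $Q_i$, i.e.\ $\Norm{g}{L^q(Q_i)}^q\lesssim_{p,q}\abs{Q_i}^{1-q/p}\Norm{g}{L^{p,\infty}(Q_i)}^q$ (valid since $q<p$), one gets
\begin{equation*}
  \abs{Q_i}\Big(\fint_{Q_i}\abs{f-\ave f_{Q_i}}^q\Big)^{p/q}\lesssim_{p,q}\Norm{f}{JN_p(Q_i)}^p .
\end{equation*}
It then remains to sum over $i$, which is achieved by the superadditivity $\sum_i\Norm{f}{JN_p(Q_i)}^p\leq\Norm{f}{JN_p(Q)}^p$: for each $i$ one picks a disjoint family of cubes in $Q_i$ almost realising $\Norm{f}{JN_p(Q_i)}^p$, and the union of these families over $i$ is then a single admissible competitor in the supremum defining $\Norm{f}{JN_p(Q)}^p$, because the $Q_i$ are pairwise disjoint (a summable choice of errors covers a possibly infinite family $\{Q_i\}$).

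The one non-elementary ingredient is the quantitative John--Nirenberg inequality $JN_p\hookrightarrow L^{p,\infty}$ used in the range $q<p$; this is among the classical facts about $JN_p$ recalled in the introduction, so I would simply cite it rather than reprove it (via a Calder\'on--Zygmund iteration), and I regard invoking it as the crux of that case. Everything else --- the two H\"older/Jensen estimates, the local weak-$L^p$-into-$L^q$ embedding, and the superadditivity of $\Norm{\cdot}{JN_p(\cdot)}^p$ over disjoint cubes --- is routine bookkeeping.
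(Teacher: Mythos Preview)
Your proof is correct and follows essentially the same route as the paper's: the trivial one-cube collection for $JN_{p,q}\subset L^q$ when $q\geq p$, H\"older/Jensen for $L^q\subset JN_{p,q}$ (the paper phrases your H\"older step as Jensen for the concave function $t\mapsto t^{p/q}$ applied to the convex combination with weights $\abs{Q_i}/\abs{Q}$, which is the same computation), termwise Jensen for $JN_{p,q}\subset JN_p$ when $q<p$, and the John--Nirenberg embedding $JN_p\hookrightarrow L^{p,\infty}$ combined with the local $L^{p,\infty}\hookrightarrow L^q$ bound and the superadditivity of $\Norm{\cdot}{JN_p}^p$ over disjoint cubes for the remaining inclusion.
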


\begin{proof}
By H\"older's inequality it is clear that $JN_{p,q}\subset JN_p=JN_{p,1}$ for $q\geq 1$. Let $q\in(1,p)$ and consider disjoint cubes $Q_i$. Then
\begin{equation*}
  \Big(\fint_{Q_i}\abs{f-\ave{f}_{Q_i}}^q\Big)^{1/q}
  \lesssim\abs{Q_i}^{-1/p}\Norm{f-\ave{f}_{Q_i}}{L^{p,\infty}(Q_i)}
  \lesssim\abs{Q_i}^{-1/p}\Norm{f}{JN_p(Q_i)}
\end{equation*}
by the embedding $L^{p,\infty}\subset L^q$ and the John--Nirenberg lemma for $JN_p$. If we now choose disjoint subcubes $Q_{ij}\subset Q_i$ for which the $JN_p(Q_i)$ norm of $f$ is almost achieved, we have
\begin{equation*}
\begin{split}
    \sum_i &\abs{Q_i}\Big(\fint_{Q_i}\abs{f-\ave{f}_{Q_i}}^q\Big)^{p/q} 
  \lesssim \sum_i \Norm{f}{JN_p(Q_i)}^p \\
  &  \lesssim\sum_i\sum_j \abs{Q_{ij}}\Big(\fint_{Q_{ij}}\abs{f-\ave{f}_{Q_{ij}}}\Big)^p 
  \leq\Norm{f}{JN_p(Q)}^p,
\end{split}  
\end{equation*}
since $Q_{ij}$ are disjoint subcubes of $Q$. This shows that $JN_p\subset JN_{p,q}$ for $q<p$.

For $q\geq p$, by considering the trivial partition consisting of the single cube $Q$, we find that
\begin{equation*}
  \abs{Q}\Big(\fint_Q\abs{f-\ave{f}_Q}^q\Big)^{p/q}\leq\Norm{f}{JN_{p,q}(Q)}^p
\end{equation*}
so that $JN_{p,q}\subset L^q$. On the other hand, Jensen's inequality (applied to the convex combination with coefficients $\abs{Q_i}/\abs{Q}$) shows that
\begin{equation*}
\begin{split}
  \sum_i\frac{\abs{Q_i}}{\abs{Q}}\Big(\fint_{Q_i}\abs{f-\ave{f}_{Q_i}}^q\Big)^{p/q}
  &\lesssim \Big(\sum_i\frac{\abs{Q_i}}{\abs{Q}}\fint_{Q_i}\abs{f}^q\Big)^{p/q} \\
  &= \Big(\sum_i\frac{1}{\abs{Q}}\int_{Q_i}\abs{f}^q\Big)^{p/q}
    \leq \Big(\frac{1}{\abs{Q}}\int_{Q}\abs{f}^q\Big)^{p/q}
\end{split}
\end{equation*}
and hence $L^q\subset JN_{p,q}$ as well.
\end{proof}

\section{Duality}\label{sec:duality}
We now work on a fixed cube $Q_0$ in $\R^d$.
In analogy with the well-known $H^1$-$\BMO$ duality, one might expect to identify $JN_p$ with the dual of some ``Hardy kind of'' space, say $HK_{p'}$. Indeed, from standard duality arguments one can see that
\begin{equation*}
  \Norm{f}{JN_p(Q_0)}\eqsim
  \sup_g |\langle f, g \rangle|,
\end{equation*}
where  $\eqsim$ indicates that $\lesssim$ holds in both directions,
and the supremum is taken over all $g=\sum_j a_j$ associated to sequences of functions $a_j$
defined on disjoint subcubes $Q_j\subset Q_0$ and satisfying
\begin{equation*}
 a_j\in L^\infty_0(Q_j),\quad  \sum_j\abs{Q_j}\Norm{a_j}{\infty}^{p'}\leq 1.
\end{equation*}
 Here $L_0^s(Q_j)$, $1 \leq s \leq \infty$, denotes the space of $L^s$ functions on $Q_j$ with mean zero.  For such $g$ we can define
$$\langle f, g \rangle:=\sum_j \int_{Q_j} f a_j$$
whenever $f \in JN_p(Q_0)$.  This suggests that a predual of $JN_p$ might be a linear space generated by all functions $g$ of this form. Note that each $a_j$ is (maybe up to scaling) an \emph{atom} of the Hardy space $H^1$. We shall refer to functions $g$ as above by the name \emph{polymers}. (The word `molecule' already has a different established usage in the theory of Hardy spaces.)

In analogy with the notion of Hardy space $L^q$-atoms, $1 < q \leq \infty$, we make a slightly more general definition:

\begin{definition}\label{def:polymers}
Let $1<r<s\leq\infty$.
We say that $g$ is an $(r,s)$-polymer if $g=\sum_{j=1}^\infty a_j$ pointwise, where $a_j\in L^s_0(Q_j)$ for disjoint cubes $Q_j$ (note that the pointwise convergence of such a series is trivial by disjointness), and
\begin{equation*}
  \Norm{(a_j)_{j=1}^\infty}{(r,s)}=\Big(\sum_j\abs{Q_j}\Big[\fint_{Q_j}\abs{a_j}^s\Big]^{r/s}\Big)^{1/r}<\infty,
\end{equation*}
with the usual reinterpretation for $s=\infty$.  
We define $\Norm{g}{(r,s)}$ as the infimum of $\Norm{(a_j)_{j=1}^\infty}{(r,s)}$ over all such representations of $g$ as $\sum_{j=1}^\infty a_j$.
The functions $a_j$ making up $g$ will be called $s$-atoms.

We say that $g\in HK_{rs}(Q_0)$ if there is a representation, convergent in norm in $L^r(Q_0)$,
\begin{equation*}
  g=\sum_{i=0}^\infty g_i,
\end{equation*}
where each $g_i$ is an $(r,s)$-polymer (thus in $L^r(Q_0)$ as we check in Remark \ref{rem:Lr} below), and $\sum_{i=0}^\infty \Norm{g_i}{(r,s)}<\infty$. We define $\Norm{g}{HK_{rs}}$ as the infimum of such sums over all such representations.
\end{definition}

\begin{remark}
\label{rem:Lr}
It is immediate from Jensen's inequality that any $(r,s)$-polymer $g_i=\sum_j a_{ij}$ satisfies
\begin{equation*}
  \Norm{g_i}{r}
  =\Big(\sum_j\abs{Q_{ij}}\fint_{Q_{ij}}\abs{a_{ij}}^r\Big)^{1/r}
  \leq\Big(\sum_j\abs{Q_{ij}}\Big[\fint_{Q_{ij}}\abs{a_{ij}}^s\Big]^{r/s}\Big)^{1/r}
%  \Norm{g_i}{(r,s)}.
\end{equation*}
Taking the infimum over all representations, it follows that $\Norm{g_i}{r}\leq\Norm{g_i}{(r,s)}$.
Thus, if $g\in HK_{rs}(Q_{0})$, its polymeric representation $g=\sum_j g_j$ converges in $L^r(Q_0)$, and $g\in L^r(Q_0)$.

Moreover, $L^s_0(Q_0)$ is a dense subspace  of $HK_{rs}(Q_0)$:  note that every $g\in L^s_0(Q_0)$ is an $(r,s)$-polymer with a trivial expansion consisting of one $s$-atom, and hence $L^s_0(Q_0)\subset HK_{rs}(Q_0)$ with a continuous embedding.  The convergence of the $(r,s)$-polymeric expansion $g=\sum_{i=1}^\infty g_i$ and the atomic expansions $g_i=\sum_{j=1}^\infty a_{ij}$ in the norm of $HK_{rs}(Q_0)$ shows that the finite sums $\sum_{i=1}^M\sum_{j=1}^N a_{ij}$ are dense in $HK_{rs}(Q_0)$, and these finite sums belong to $L^s_0(Q_0)$, since each $s$-atom $a_{ij}$ belongs to this space. 
\end{remark}

\begin{remark}
One may find a certain analogy between Definition \ref{def:polymers} and the atomic description of $H^1$ on the bi-disc by Chang and Fefferman \cite{CF:80}. Namely, a generic function in the bi-disc-$H^1$ is expressed as a sum of certain bi-disc atoms, each of which is further decomposed into pieces called `elementary particles', just like our $HK_{rs}$ functions are sums of polymers, each of which is a sum of (usual $H^1$-)atoms. Aside from the two levels of the expansion, however, Definition \ref{def:polymers} has not much in common with the Chang--Fefferman atoms, so that copying their nomenclature (atoms and elementary particles instead of polymers and atoms) would seem more misleading than useful in our context. The functions $a_j$ in Definition \ref{def:polymers}, are precisely classical atoms, so it seems natural to adopt the name polymer for the larger structures built from them. In contrast, the Chang--Fefferman atoms, while being the larger structures in their expansion, have properties closely analogous to those of classical atoms, whereas their elementary particles have additional smoothness properties, which are neither present in the classical $H^1$ theory nor in our new spaces. Altogether, our $HK_{rs}$ spaces should be seen as a closer relative of the classical $H^1$ than its bi-disc version.
\end{remark}

Our duality results for $JN_p$ will ultimately rely on the well-known duality of the $L^p$ spaces. In order to have an access to the simple duality of the reflexive $L^p$ spaces (in contrast to the more complicated situation of $L^\infty$), we first establish the following reduction to finite indices in our candidate predual space:

\begin{proposition}\label{prop:HKspaces}
We have the coincidence of spaces $HK_{r\infty}(Q_{0})=HK_{rs}(Q_{0})$ with equivalence of norms for all $s\in(r,\infty)$.
\end{proposition}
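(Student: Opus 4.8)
The plan is to prove the two inclusions $HK_{r\infty}\subseteq HK_{rs}$ and $HK_{rs}\subseteq HK_{r\infty}$ separately, with the nontrivial direction being the second one. The first inclusion is essentially immediate: by H\"older's inequality, every $\infty$-atom is (up to the same normalization) an $s$-atom, since $\big(\fint_{Q_j}\abs{a_j}^s\big)^{1/s}\leq\Norm{a_j}{\infty}$, so any $(r,\infty)$-polymer is an $(r,s)$-polymer with $\Norm{g}{(r,s)}\leq\Norm{g}{(r,\infty)}$, and passing to sums of polymers gives $\Norm{g}{HK_{rs}}\leq\Norm{g}{HK_{r\infty}}$.

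For the converse, the key point is to show that a single $(r,s)$-polymer $g=\sum_j a_j$ can be rewritten as a norm-convergent sum of $(r,\infty)$-polymers with control on the total $(r,\infty)$-norm, i.e.\ $\Norm{g}{HK_{r\infty}}\lesssim\Norm{(a_j)_j}{(r,s)}$; once this is done for polymers, summing over the polymeric expansion of a general $g\in HK_{rs}$ finishes the proof. So fix an $s$-atom $a$ supported on a cube $Q$ with $\fint_Q a=0$. I would decompose $a$ according to its size via a Calder\'on--Zygmund / layer-cake argument: for $k\geq 0$ let $\Omega_k=\{x\in Q:\abs{a(x)}>\lambda 2^k\}$ for a suitable threshold $\lambda$ (chosen comparable to $\big(\fint_Q\abs{a}^s\big)^{1/s}$, or perhaps to a dyadic maximal function level), write $\Omega_k$ as a union of disjoint dyadic subcubes $Q_{k,\nu}$ of $Q$, and split $a=\sum_k b_k$ where each $b_k$ is roughly the part of $a$ living at height $\sim\lambda 2^k$, redistributed so as to have mean zero on each $Q_{k,\nu}$. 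Each $b_k$, after dividing into its restrictions to the $Q_{k,\nu}$ and subtracting means, becomes a constant multiple of an $(r,\infty)$-polymer whose atoms are bounded by $\sim\lambda 2^k$ on cubes of total measure $\abs{\Omega_k}$; thus its $(r,\infty)$-norm is $\lesssim\lambda 2^k\abs{\Omega_k}^{1/r}$. The distributional estimate $\abs{\Omega_k}\lesssim (\lambda 2^k)^{-s}\int_Q\abs{a}^s$ then gives $\lambda 2^k\abs{\Omega_k}^{1/r}\lesssim (\lambda 2^k)^{1-s/r}\big(\int_Q\abs{a}^s\big)^{1/r}$, and since $s>r$ the exponent $1-s/r<0$, so summing the geometric-type series over $k$ yields a bound $\lesssim\lambda^{1-s/r+1}\abs{Q}^{\ldots}$ that, with $\lambda\sim\big(\fint_Q\abs{a}^s\big)^{1/s}$, collapses to $\lesssim\abs{Q}^{1/r}\big(\fint_Q\abs{a}^s\big)^{1/s}$, exactly the $(r,s)$-normalization of the single atom $a$. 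Applying this atom-by-atom and then polymer-by-polymer, and keeping track that all the new atoms live on subcubes of the original $Q_j$'s (hence remain disjoint across different $j$ within a fixed polymer, and can be organized into polymers), gives $\Norm{g}{HK_{r\infty}}\lesssim\Norm{g}{HK_{rs}}$.

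The main obstacle I anticipate is bookkeeping rather than a deep new idea: one must (i) make the mean-zero corrections in the Calder\'on--Zygmund decomposition without destroying the size bounds — the standard device is that on a cube $Q_{k,\nu}$ selected by the stopping condition, the average of $\abs{a}$ is comparable to the threshold, so the corrections are of the right order — and (ii) verify that the resulting collection of bounded mean-zero pieces can genuinely be grouped into countably many $(r,\infty)$-polymers with $\sum_i\Norm{g_i}{(r,\infty)}<\infty$ and with $L^r$-convergence of the regrouped series, which follows from Remark \ref{rem:Lr} together with the summability of the $(r,\infty)$-norms just established. One should also confirm that when this is applied to a full element $g=\sum_i g_i\in HK_{rs}$, the double sum of the new polymeric norms is controlled by $\sum_i\Norm{g_i}{(r,s)}$ and can be made arbitrarily close to $\Norm{g}{HK_{rs}}$, which is immediate since the per-polymer estimate has an absolute constant. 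A minor technical point is the reinterpretation for atoms that are already bounded (no decomposition needed) and ensuring the series $\sum_k b_k$ reproduces $a$ pointwise a.e., which is standard for layer-cake decompositions.
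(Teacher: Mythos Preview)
Your strategy is the same as the paper's: a Calder\'on--Zygmund type decomposition of each $s$-atom into bounded mean-zero pieces at geometrically increasing levels, exploiting $s>r$ for summability. The paper makes this precise via a dedicated lemma that decomposes $A_\ell\in L^s_0(Q_\ell)$ using the dyadic maximal function, yielding pieces $a_{kj}^\ell\in L^\infty_0(Q_{k,j}^\ell)$ with $\Norm{a_{kj}^\ell}{\infty}\lesssim C^k\lambda_\ell$ and $\bigcup_j Q_{k,j}^\ell=\{M_{Q_\ell}A_\ell>C^k\lambda_\ell\}$; this is exactly the mechanism you sketch, and your anticipated obstacle (i) about mean-zero corrections is handled there.

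There is, however, a real gap in your passage from one atom to the full polymer. You derive, for a single atom $a$ on $Q$, the bound $\sum_k\Norm{b_k}{(r,\infty)}\lesssim\abs{Q}^{1/r}\big(\fint_Q\abs{a}^s\big)^{1/s}$ and then propose to ``apply this atom-by-atom''. Summing that inequality over the atoms $a_j$ of a polymer $g$ gives
\[
\sum_j\abs{Q_j}^{1/r}\Big(\fint_{Q_j}\abs{a_j}^s\Big)^{1/s},
\]
which is an $\ell^1$ sum in $j$ and is \emph{not} controlled by the $\ell^r$ quantity $\Norm{(a_j)_j}{(r,s)}=\big(\sum_j\abs{Q_j}(\fint_{Q_j}\abs{a_j}^s)^{r/s}\big)^{1/r}$ when $r>1$. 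The remedy, which the paper implements and which your parenthetical about disjointness almost reaches but does not state, is to group by level rather than by atom: set $g_k:=\sum_{j}b_{j,k}$, using that the level-$k$ pieces from different $j$ sit on disjoint subcubes of the disjoint $Q_j$, so $g_k$ is a single $(r,\infty)$-polymer with
\[
\Norm{g_k}{(r,\infty)}^r\lesssim\sum_j(\lambda_j 2^k)^r\abs{\Omega_{j,k}}\lesssim 2^{k(r-s)}\sum_j\abs{Q_j}\lambda_j^r,
\]
and only then sum over $k$. This preserves the $\ell^r$ structure in $j$ before the geometric sum in $k$, and yields $\sum_k\Norm{g_k}{(r,\infty)}\lesssim\Norm{(a_j)_j}{(r,s)}$ as required. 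Once you make this regrouping explicit, your argument matches the paper's.
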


Hence we can define $HK_r(Q_{0}):=HK_{r\infty}(Q_{0})$ with just one index. We begin with a convenient decomposition lemma for a single atom; this is essentially known from the theory of the Hardy space $H^1$ (cf. \cite{GCRF}, Theorems III.3.6 and III.3.7), but we provide the details in a form convenient for our needs.

\begin{lemma}
Fix a constant $C>2^d$. Let $f\in L^1_0(Q_0)$ and $\lambda\geq\fint_{Q_0}\abs{f}$. Then
\begin{equation*}
  f=\sum_{k=0}^\infty\sum_j a_{kj},
\end{equation*}
where $a_{kj}\in L^\infty_0(Q_{k,j})$ for cubes $Q_{k,j}$ such that $Q_{k,j}\cap Q_{k,j'}=\varnothing$ for $j\neq j'$ and
\begin{equation*}
   \bigcup_j Q_{k,j}=\Big\{ M_{Q_0}f>C^k\lambda \Big\}\quad\forall k\geq 1,\quad \bigcup_j Q_{0,j}=Q_0,
\end{equation*}
where $M_{Q_0}$ is the maximal function related to the dyadic subcubes of the cube $Q_0$, and
\begin{equation*}
  \Norm{a_{kj}}{\infty}\lesssim C^k\lambda.
\end{equation*}
\end{lemma}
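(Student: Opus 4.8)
The plan is to run a Calderón--Zygmund stopping time argument on the dyadic maximal function $M_{Q_0}f$ at the levels $C^k\lambda$, and then to build the atoms from the successive good/bad parts of $f$ relative to these stopping cubes. First I would fix $k\geq 1$ and let $\{Q_{k,j}\}_j$ be the maximal dyadic subcubes of $Q_0$ on which $\fint|f|>C^k\lambda$; since $C^k\lambda\geq\lambda\geq\fint_{Q_0}|f|$, these are proper subcubes, they are pairwise disjoint by maximality, and their union is exactly $\{M_{Q_0}f>C^k\lambda\}$ (here one uses that $M_{Q_0}$ is the \emph{dyadic} maximal function, so its superlevel sets are unions of dyadic cubes, and a cube lies in the set iff it is contained in a maximal such cube). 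Maximality also gives the standard pointwise bound $\fint_{Q_{k,j}}|f|\leq 2^d C^k\lambda$, because the dyadic parent of $Q_{k,j}$ fails the stopping condition. For $k=0$ I would simply take $\{Q_{0,j}\}$ to be any partition of $Q_0$ into dyadic cubes (e.g. $Q_0$ itself, or all maximal cubes already strictly inside $\{M_{Q_0}f>C\lambda\}$ together with a cover of the rest) so that $\bigcup_j Q_{0,j}=Q_0$; the cleanest choice is $Q_{0,j}$ = the maximal dyadic subcubes contained in $\{M_{Q_0}f>C\lambda\}$ together with the cube $Q_0$ itself if that set is not all of $Q_0$, but I will phrase it as: the cubes at level $0$ are $Q_0$ and the level-$1$ cubes, reconciling the index bookkeeping below.

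Next I would define the atoms. Write $E_k$ for the $\sigma$-algebra generated by the level-$k$ stopping cubes together with $Q_0\setminus\bigcup_j Q_{k,j}$, and let $f_k:=\Exp(f\mid E_k)$ be the corresponding conditional expectation (equivalently, $f_k$ equals $\fint_{Q_{k,j}}f$ on each $Q_{k,j}$ and equals $f$ off all the level-$k$ cubes, using that on the complement of $\{M_{Q_0}f>C^k\lambda\}$ the Lebesgue differentiation theorem via the dyadic maximal function gives $|f|\leq C^k\lambda$ a.e.). Then I would set, for each $j$,
\begin{equation*}
  a_{kj}:=(f_{k+1}-f_k)1_{Q_{k,j}},
\end{equation*}
so that $a_{kj}$ is supported in $Q_{k,j}$. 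The key point is that $\int_{Q_{k,j}}a_{kj}=\int_{Q_{k,j}}(f_{k+1}-f_k)=\int_{Q_{k,j}}f-\int_{Q_{k,j}}f=0$ since both $f_{k+1}$ and $f_k$ preserve the average of $f$ over $Q_{k,j}$ (the level-$(k+1)$ cubes are nested inside the level-$k$ ones, so averaging is iterated), hence $a_{kj}\in L^\infty_0(Q_{k,j})$. The telescoping identity $\sum_{k=0}^\infty (f_{k+1}-f_k)=f-f_0$ together with $f_0\equiv 0$ (as $f$ has mean zero over $Q_0$, if $Q_0$ is the only level-$0$ cube) or absorbing $f_0$ into the $k=0$ term gives $f=\sum_{k,j}a_{kj}$ pointwise a.e., with $L^1$ convergence from $\|f_k-f\|_1\to 0$.

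Finally I would prove the sup bound $\|a_{kj}\|_\infty\lesssim C^k\lambda$. On $Q_{k,j}$ one has $|f_k|=|\fint_{Q_{k,j}}f|\leq\fint_{Q_{k,j}}|f|\leq 2^d C^k\lambda$. For $f_{k+1}$ restricted to $Q_{k,j}$: at a point lying in some level-$(k+1)$ cube $Q_{k+1,j'}\subseteq Q_{k,j}$, $|f_{k+1}|\leq\fint_{Q_{k+1,j'}}|f|\leq 2^d C^{k+1}\lambda$; at a point of $Q_{k,j}$ not in any level-$(k+1)$ cube, $|f_{k+1}|=|f|\leq C^{k+1}\lambda$ a.e. (that point is outside $\{M_{Q_0}f>C^{k+1}\lambda\}$). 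Either way $|f_{k+1}|\leq 2^d C^{k+1}\lambda$ on $Q_{k,j}$, so $\|a_{kj}\|_\infty\leq |f_{k+1}|+|f_k|\leq 2^d C^{k+1}\lambda+2^d C^k\lambda\leq 2^{d+1}C\cdot C^k\lambda$, which is the asserted bound with implied constant $2^{d+1}C$. I expect the only real subtlety — the ``main obstacle'' — to be the bookkeeping of the $k=0$ layer: one must set things up so that $\bigcup_j Q_{0,j}=Q_0$ (not just a superlevel set) while still having $\sum_{k\geq 0}(f_{k+1}-f_k)$ telescope to $f$; this is handled by declaring $f_0:=\fint_{Q_0}f=0$ and treating $Q_0$ itself as the single level-$0$ cube, with the level-$1$ stopping cubes giving the refinement, and then everything else is the routine Calderón--Zygmund computation sketched above.
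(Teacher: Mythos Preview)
Your proof is correct and coincides with the paper's: the paper also takes $Q_{0,j}:=Q_0$ as the single level-$0$ cube and defines $a_{kj}$ by exactly the formula your conditional-expectation description unpacks to, namely $a_{kj}=1_{Q_{k,j}\setminus\bigcup_i Q_{k+1,i}}(f-\langle f\rangle_{Q_{k,j}})+\sum_{i:Q_{k+1,i}\subset Q_{k,j}}1_{Q_{k+1,i}}(\langle f\rangle_{Q_{k+1,i}}-\langle f\rangle_{Q_{k,j}})$. Your initial detour on the $k=0$ bookkeeping is unnecessary---the choice you settle on at the end (the single cube $Q_0$ with $f_0=\langle f\rangle_{Q_0}=0$) is the clean one and matches the paper.
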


\begin{proof}
For each $k\geq 1$, let $Q_{k,j}$ be the maximal dyadic subcubes of $Q_0$ such that $\ave{\abs{f}}_{Q_{k,j}}>C^k\lambda$. By maximality and doubling, we also have
\begin{equation*}
   \ave{\abs{f}}_{Q_{kj}}\leq 2^{d}C^k \lambda<C^{k+1}\lambda
\end{equation*}
since $C>2^{d}$, so that any given cube can appear in at most one ``level'' $k$. We also define $Q_{0,j}:=Q_0$.

We can then write
\begin{equation*}
\begin{split}
  f &=1_{Q_0}(f-\ave{f}_{Q_0})=\sum_{k=0}^\infty\sum_j a_{kj},\\
   a_{kj} &:=1_{Q_{k,j}\setminus\bigcup_i Q_{k+1,i}}(f-\ave{f}_{Q_{k,j}})+\sum_{i:Q_{k+1,i}\subset Q_{k,j}}1_{Q_{k+1,i}}(\ave{f}_{Q_{k+1,i}}-\ave{f}_{Q_{k,j}}).
\end{split}
\end{equation*}
In fact, this is a basic telescoping identity when there exists a maximal $k$ with $x\in Q_{k,j}$ (and hence $x\notin Q_{k+1,i}$ for any $i$). On the other hand, if there are arbitrary large $k$ with $x\in Q_{k,j}$, then $M_{Q_0}f(x)=\infty$, and the set of such points has measure zero, hence is of no concern to us.

It is straightforward that $\Norm{a_{kj}}{\infty}\lesssim C^k\lambda$ and $\int a_{kj}=0$. Moreover, $a_{kj}$ is supported on $Q_{k,j}$, and these cubes are disjoint for fixed $k$ with $\bigcup_j Q_{k,j}=\{M_{Q_0}f>C^k\lambda\}$.
\end{proof}

\begin{proof}[Proof of Proposition \ref{prop:HKspaces}]
Since every $(r,\infty)$-polymer is \emph{a fortiori} an $(r,s)$-polymer, it is enough to prove that every $(r,s)$-polymer $g$ can
be decomposed into $(r,\infty)$-polymers, namely $g=\sum_{k=1}^\infty g_k$, with $\sum_{k=1}^\infty \Norm{g_k}{(r,\infty)}\lesssim\Norm{g}{(r,s)}$. 

By definition, we have $g=\sum_{\ell=1}^\infty A_\ell$, with atoms $A_\ell\in L^s_0(Q_\ell)$ supported on disjoint cubes and such that $\Norm{(A_\ell)_{\ell=1}^\infty}{(r,s)}=\sum_\ell\abs{Q_\ell}(\fint_{Q_\ell}\abs{A_\ell}^s)^{r/s}\lesssim\Norm{g}{(r,s)}^r$.

For each $\ell$, we apply the decomposition of the previous lemma to $A_\ell \in L_0^s(Q_\ell)$ in place of $f$, with $\lambda:=(\fint_{Q_\ell}\abs{A_\ell}^s)^{1/s}$. This leads to
\begin{equation*}
A_\ell=\sum_{k=0}^\infty\sum_j a_{kj}^\ell,
\end{equation*}
where $a_{kj}^\ell\in L^\infty_0(Q_{k,j}^\ell)$ for disjoint (in $j$) cubes $Q_{k,j}^\ell\subset Q_\ell$ such that
\begin{equation*}
  \bigcup_{j}Q_{k,j}^\ell=\Big\{ M_{Q_\ell} A_\ell>C^k(\fint_{Q_\ell}\abs{A_\ell}^s)^{1/s}\Big\}\quad\forall k\geq 1,\quad \bigcup_{j}Q_{0,j}^\ell=Q_\ell,
\end{equation*}
and  $\Norm{a_{kj}^\ell}{\infty}\lesssim C^k(\fint_{Q_\ell}\abs{A_\ell}^s)^{1/s}$. Since the cubes $Q_\ell$ are disjoint, it follows that the cubes $Q_{k,j}^\ell$ are disjoint when both $\ell$ and $j$ are allowed to vary, for each fixed $k$. Thus the following function, defined pointwise by
\begin{equation*}
g_k:=\sum_{\ell,j}a_{kj}^\ell,
\end{equation*}
satisfies the atomic structure requirements for an $(r,\infty)$-polymer, and it remains to check the relevant norm estimates. That is, we need to estimate
\begin{equation*}
\begin{split}
  \sum_{k=0}^\infty\Norm{g_k}{(r,\infty)}
  &\leq\sum_{k=0}^\infty\Big(\sum_{\ell,j}\abs{Q_{k,j}^\ell}\Norm{a_{kj}^\ell}{\infty}^r\Big)^{1/r} \\
  &\lesssim \sum_{k=0}^\infty C^k \Big(\sum_{\ell,j}\abs{Q_{k,j}^\ell}(\fint_{Q_\ell}\abs{A_\ell}^s)^{r/s}\Big)^{1/r} \\
  &\lesssim  \sum_{k=1}^\infty C^k \Big[\sum_\ell\Babs{\Big\{ M_{Q_\ell}A_\ell>C^k(\fint_{Q_\ell}\abs{A_\ell}^s)^{1/s}\Big\}}(\fint_{Q_\ell}\abs{A_\ell}^s)^{r/s}\Big]^{1/r} \\
  &\qquad+ \Big(\sum_\ell\abs{Q_\ell}(\fint_{Q_\ell}\abs{A_\ell}^s)^{r/s}\Big)^{1/r}, 
\end{split}
\end{equation*}
where we separated the easier term corresponding to $k=0$. Note that this last term is $\Norm{(A_\ell)_{\ell=1}^\infty}{(r,s)}\lesssim\Norm{g}{(r,s)}$, so it remains to estimate the sum over $k\geq 1$.

Exploiting the fact that $s>r$, we write $C^k=C^{-k\eps}C^{ks/r}$, where $\eps=s/r-1>0$. Then repeated use of H\"older's inequality gives
\begin{equation*}
\begin{split}
  &\sum_{k=1}^\infty C^{-k\eps}C^{ks/r} \Big[\sum_\ell\Babs{\Big\{ M_{Q_\ell} A_\ell>C^k(\fint_{Q_\ell}\abs{A_\ell}^s)^{1/s}\Big\}}(\fint_{Q_\ell}\abs{A_\ell}^s)^{r/s}\Big]^{1/r} \\
  &\leq\Big(\sum_{k=1}^\infty C^{-k\eps r'}\Big)^{1/r'}
    \Big[\sum_{k=1}^\infty C^{ks} \sum_\ell\Babs{\Big\{ M_{Q_\ell} A_\ell>C^k(\fint_{Q_\ell}\abs{A_\ell}^s)^{1/s}\Big\}}(\fint_{Q_\ell}\abs{A_\ell}^s)^{r/s}\Big]^{1/r} \\
   &\lesssim \Big[\sum_\ell  (\fint_{Q_\ell}\abs{A_\ell}^s)^{(r-s)/s}
      \sum_{k=1}^\infty \Big[C^{k} (\fint_{Q_\ell}\abs{A_\ell}^s)^{1/s}\Big]^s \Babs{\Big\{ M_{Q_\ell} A_\ell>C^k(\fint_{Q_\ell}\abs{A_\ell}^s)^{1/s}\Big\}}\Big]^{1/r} \\
    &\lesssim \Big[\sum_\ell  (\fint_{Q_\ell}\abs{A_\ell}^s)^{(r-s)/s} \Norm{M_{Q_\ell}A_\ell}{L^s}^s\Big]^{1/r}\qquad
       \text{by Cavalieri's principle} \\
    &\lesssim \Big[\sum_\ell  (\fint_{Q_\ell}\abs{A_\ell}^s)^{(r-s)/s} (\fint_{Q_\ell}\abs{A_\ell}^s)\abs{Q_\ell}\Big]^{1/r}\qquad
    \text{by the boundedness of $M_{Q_\ell}$ on $L^s$}\\
    &= \Big[\sum_\ell  (\fint_{Q_\ell}\abs{A_\ell}^s)^{r/s} \abs{Q_\ell}\Big]^{1/r}
      =\Norm{(A_\ell)_{\ell=1}^\infty}{(r,s)}\lesssim\Norm{g}{(r,s)}.
\end{split}
\end{equation*}
This completes the proof.
\end{proof}

Now we are ready for our main result about duality:

\begin{theorem}\label{thm:duality}
Let $r\in(1,\infty)$. Then $(HK_r(Q_0))^*\simeq JN_{r'}(Q_0)$ in the sense that:
\begin{enumerate}
  \item Every $f\in JN_{r'}(Q_0)$ induces a linear functional $\Lambda_f\in (HK_r(Q_0))^*$ of norm
\begin{equation*}
  \Norm{\Lambda_f}{(HK_r)^*}\eqsim\Norm{f}{JN_{r'}}
\end{equation*}
   by
\begin{equation}\label{eq:Lambdafg}
  \Lambda_f g=\sum_{i=1}^\infty\langle f,g_i \rangle
  =\sum_{i=1}^\infty \sum_j\int_{Q_{ij}} f a_{ij} ,
\end{equation}
whenever
\begin{equation*}
   g=\sum_{i=1}^\infty g_i,\qquad g_i=\sum_{j=1}^\infty a_{ij},
\end{equation*}
where $g_i$ are $(r,s)$-polymers for some $s\in(r,\infty]$ and $a_{ij}\in L^s_0(Q_{ij})$ for disjoint cubes $\{Q_{ij}\}_j$ are $s$-atoms such that
\begin{equation*}
  \sum_{i=1}^\infty\Norm{g_i}{(r,s)}\leq \sum_{i=1}^\infty\Norm{(a_{ij})_{j=1}^\infty}{(r,s)}<\infty.
\end{equation*}
 The value of $\Lambda_f g$ above is independent of the chosen representation of $g$ as a series of polymers, and their representations as series of atoms, as long as the finiteness condition above is satisfied. In fact, an alternative way of computing $\Lambda_f g$ is also given by
\begin{equation*}
    \Lambda_f g=\lim_{N\to\infty}\int_{Q_0}f_N g,\quad f_N(x):=\begin{cases} f(x), & \text{if }\abs{f(x)}\leq N, \\ \frac{N}{\abs{f(x)}}f(x), & \text{if }\abs{f(x)}>N, \end{cases}
\end{equation*}
where the integrals and the limit exists for all $f\in JN_{r'}(Q_0)$ and $g\in HK_r(Q_0)$.
 \item Every continuous linear functional $\Lambda$ on $HK_r(Q_0)$ has the form $\Lambda=\Lambda_f$ for some $f\in JN_{r'}(Q_0)$.
\end{enumerate}
\end{theorem}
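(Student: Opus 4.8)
The plan is to adapt the classical $H^1$-$\BMO$ duality scheme, the one genuinely new feature being that a function $f\in JN_{r'}(Q_0)$ lies only in $L^{r',\infty}(Q_0)$ and in general \emph{not} in $L^{r'}(Q_0)=(L^r(Q_0))^*$, so that its pairing with $g\in HK_r(Q_0)\subset L^r(Q_0)$ is not a formal instance of $L^p$-duality. The device for coping with this is the truncation $f_N$, together with the elementary inequality $\Norm{f_N}{JN_{r'}}\leq\Norm{f}{JN_{r'}}$ (truncation does not increase oscillation, as already used in Section \ref{sec:monotone}).

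\emph{Step 1: boundedness of $\Lambda_f$ and the alternative formulas.} For an $(r,\infty)$-polymer $\gamma=\sum_j a_j$ with $a_j\in L^\infty_0(Q_j)$, the vanishing mean of $a_j$ followed by Hölder's inequality, applied once in each $Q_j$ and once over $j$ with the conjugate exponents $r,r'$, yields the basic estimate
\begin{equation*}
  \sum_j\Babs{\int_{Q_j}(f-\ave{f}_{Q_j})a_j}
  \leq\sum_j\Norm{a_j}{\infty}\abs{Q_j}\fint_{Q_j}\abs{f-\ave{f}_{Q_j}}
  \leq\Norm{(a_j)_j}{(r,\infty)}\,\Norm{f}{JN_{r'}}.
\end{equation*}
Summing over a polymeric representation $g=\sum_i g_i$ with $\sum_i\Norm{g_i}{(r,\infty)}<\infty$ shows that the double series in \eqref{eq:Lambdafg} converges absolutely and is bounded by $\Norm{f}{JN_{r'}}\sum_i\Norm{(a_{ij})_j}{(r,\infty)}$; passing to the infimum over representations gives $\abs{\Lambda_f g}\lesssim\Norm{f}{JN_{r'}}\Norm{g}{HK_r}$. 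To see that the value is representation-independent and equals $\lim_N\int_{Q_0}f_Ng$, I would note that for the bounded (hence $L^{r'}$) function $f_N$ one may legitimately expand $\int_{Q_0}f_Ng=\sum_{i,j}\int_{Q_{ij}}(f_N-\ave{f_N}_{Q_{ij}})a_{ij}$, using that the polymeric and atomic series converge in $L^r(Q_0)$, and then let $N\to\infty$: each integral converges by dominated convergence on $Q_{ij}$, while the interchange with the sums in $i$ and $j$ is licensed by the displayed estimate, which — combined with $\Norm{f_N}{JN_{r'}}\leq\Norm{f}{JN_{r'}}$ — furnishes tail bounds uniform in $N$. In particular $\Lambda_f g=\int_{Q_0}fg$ for $g\in L^\infty_0(Q_0)$. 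For a general $s\in(r,\infty)$ one re-expands every $(r,s)$-polymer as a norm-convergent sum of $(r,\infty)$-polymers via Proposition \ref{prop:HKspaces}; this reduces the $(r,s)$-case to the previous one and shows \eqref{eq:Lambdafg} computes the same number $\lim_N\int_{Q_0}f_Ng$.

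\emph{Step 2: the reverse inequality, and surjectivity.} For the lower bound I would test against explicit atoms: given finitely many pairwise disjoint cubes $Q_i\subset Q_0$, put $a_i:=\sign(f-\ave{f}_{Q_i})-\ave{\sign(f-\ave{f}_{Q_i})}_{Q_i}\in L^\infty_0(Q_i)$, so that $\Norm{a_i}{\infty}\leq2$ and $\int_{Q_i}fa_i=\int_{Q_i}\abs{f-\ave{f}_{Q_i}}$; with the weights $c_i:=\alpha_i^{r'-1}$, $\alpha_i:=\fint_{Q_i}\abs{f-\ave{f}_{Q_i}}$, the function $g:=\sum_ic_ia_i\in L^\infty_0(Q_0)$ is a polymer with $\Norm{g}{HK_r}\lesssim(\sum_i\abs{Q_i}\alpha_i^{r'})^{1/r}$, while $\Lambda_f g=\sum_ic_i\int_{Q_i}fa_i=\sum_i\abs{Q_i}\alpha_i^{r'}$; hence $(\sum_i\abs{Q_i}\alpha_i^{r'})^{1/r'}\lesssim\Norm{\Lambda_f}{(HK_r)^*}$, and taking the supremum gives $\Norm{f}{JN_{r'}}\lesssim\Norm{\Lambda_f}{(HK_r)^*}$. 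For surjectivity, fix $s\in(r,\infty)$ and recall from Remark \ref{rem:Lr} that $L^s_0(Q_0)\hookrightarrow HK_{rs}(Q_0)=HK_r(Q_0)$ continuously and densely. Thus $\Lambda\in(HK_r(Q_0))^*$ restricts to a bounded functional on the closed subspace $L^s_0(Q_0)$ of the reflexive space $L^s(Q_0)$, hence is represented there by some $f\in L^{s'}(Q_0)$, unique up to an additive constant (the annihilator of $L^s_0$ consists of the constants), with $\Norm{f}{L^{s'}}\lesssim\Norm{\Lambda}{(HK_r)^*}$. The same testing argument, now using $\abs{\Lambda g}\leq\Norm{\Lambda}{(HK_r)^*}\Norm{g}{HK_r}$, forces $\Norm{f}{JN_{r'}}\lesssim\Norm{\Lambda}{(HK_r)^*}$, so $f\in JN_{r'}(Q_0)$; and since $f\in L^{s'}(Q_0)$ we have $f_N\to f$ in $L^{s'}(Q_0)$, whence $\Lambda_f g=\lim_N\int_{Q_0}f_Ng=\int_{Q_0}fg=\Lambda g$ for all $g\in L^s_0(Q_0)$, and $\Lambda=\Lambda_f$ by density.

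\emph{Main obstacle.} The crux is Step 1: because $JN_{r'}\subset L^{r',\infty}$ and not $L^{r'}$, the very definition of $\Lambda_f g$ and its independence of the polymeric decomposition of $g$ are not formal consequences of the duality $(L^r)^*=L^{r'}$, but must be obtained through the limiting argument via the truncations $f_N$; the inequality $\Norm{f_N}{JN_{r'}}\leq\Norm{f}{JN_{r'}}$ and the $N$-uniform tail estimates coming from the $HK_r$-summability are precisely what keeps this limit under control. Everything else — the Hölder estimate, the choice of testing functions, and the reduction of general $s$ to $s=\infty$ through Proposition \ref{prop:HKspaces} — is routine.
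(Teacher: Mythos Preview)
Your proposal is correct and follows essentially the same four-stage pattern as the paper: H\"older for boundedness, truncations $f_N$ for representation-independence, restriction to $L^s_0(Q_0)$ and reflexive $L^p$-duality for surjectivity, and explicit testing atoms for the reverse norm inequality.

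The one place where you and the paper diverge is in the handling of general $s\in(r,\infty)$ in Step~1. The paper does not reduce to $s=\infty$; instead it invokes the equivalence $JN_{r'}\simeq JN_{r',s'}$ from Section~\ref{sec:complements} and runs the H\"older and dominated-convergence arguments directly with the given $s$-atoms $a_{ij}$, obtaining the $N$-uniform dominant $\abs{Q_{ij}}(\fint_{Q_{ij}}\abs{f-\ave{f}_{Q_{ij}}}^{s'})^{1/s'}(\fint_{Q_{ij}}\abs{a_{ij}}^s)^{1/s}$. Your re-expansion via Proposition~\ref{prop:HKspaces} certainly shows that $\Lambda_f g=\lim_N\int f_N g$ exists and is representation-free, but it does not, as literally written, verify that the \emph{original} $(r,s)$-double series $\sum_{i,j}\int_{Q_{ij}}fa_{ij}$ in \eqref{eq:Lambdafg} converges to this value: the re-expansion replaces the $s$-atoms by different $\infty$-atoms, and the crude dominant $\int\abs{f}\abs{a_{ij}}$ is not obviously summable over $j$ since $\Norm{g_i}{L^s}$ is not controlled by $\Norm{g_i}{(r,s)}$ when $s>r$. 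The clean fix is exactly the paper's: use $JN_{r',s'}\simeq JN_{r'}$ to get the right summable majorant. A second cosmetic difference is that your testing atoms in Step~2 are the $L^\infty$-extremizers $\sign(f-\ave{f}_{Q_i})$ (recovering the $JN_{r',1}$-expression), whereas the paper uses $L^s$-extremizers $b_j$ (recovering the $JN_{r',s'}$-expression); both work.
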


The argument below will follow the broad outline of the proof of the $H^1$-$\BMO$ duality as given in \cite{Meyer:90}, Section 5.6. See also Theorem 1 in Ch. IV of \cite{St:93}. It might be noted that the question of well-definedness of the expression in \eqref{eq:Lambdafg} is somewhat serious in view of the examples given in \cite{Bownik:05} in the context of $H^1$.

\begin{proof}
(a) Let first $g$ be an $(r,s)$-polymer for $1<r<s\leq\infty$. Then $1\leq s'<r'<\infty$ and $f\in JN_{r'}(Q_0)\simeq JN_{r',s'}(Q_0)$. Let $g=\sum_{j=1}^\infty a_j$ with $a_j\in L^s_0(Q_j)$, where $\sum_j\abs{Q_j}(\fint_{Q_j}\abs{a_j}^s)^{r/s}\lesssim\Norm{g}{(r,s)}^r$ and the cubes $Q_j\subset Q_0$ are disjoint. Then $ \langle f,g \rangle
 :=\sum_j\int_{Q_j}fa_j$, where the sum converges absolutely and
\begin{equation*}%\label{eq:dualBasicEst}
\begin{split}
 |\langle f,g \rangle|
  &\leq \sum_j\Babs{\int_{Q_j}fa_j}
  \leq \sum_j\abs{Q_j}\Babs{\fint_{Q_j}(f-\ave{f}_{Q_j})a_j} \\
  &\leq\sum_j\abs{Q_j}\Big(\fint_{Q_j}\abs{f-\ave{f}_{Q_j}}^{s'}\Big)^{1/s'}\Big(\fint_{Q_j}\abs{a_j}^s\Big)^{1/s} \\
  &\leq\Big[\sum_j\abs{Q_j}\Big(\fint_{Q_j}\abs{f-\ave{f}_{Q_j}}^{s'}\Big)^{r'/s'}\Big]^{1/r'}\Big[\sum_j\abs{Q_j}\Big(\fint_{Q_j}\abs{a_j}^s\Big)^{r/s}\Big]^{1/r} \\
  &\lesssim\Norm{f}{JN_{r',s'}}\Norm{g}{(r,s)} \lesssim \Norm{f}{JN_{r'}}\Norm{g}{(r,s)}.
\end{split}
\end{equation*}
If $g\in HK_r(Q_0)\simeq HK_{rs}(Q_0)$ is a sum of $(r,s)$-polymers $g=\sum_{i=1}^\infty g_i$, with $\sum_{i=1}^\infty\Norm{g_i}{(r,s)}\lesssim\Norm{g}{HK_{r}}$, the estimate above applied to each $g_i$ gives the absolute convergence of the series $\sum_{i=1}^\infty \langle f,g_i \rangle$ with the bound
\begin{equation}\label{eq:LambdaLessJN}
  \abs{\Lambda_f g}\leq\sum_{i=1}^\infty|\langle f,g_i \rangle|\lesssim \sum_{i=1}^\infty\Norm{f}{JN_{r'}}\Norm{g_i}{(r,s)}\lesssim\Norm{f}{JN_{r'}}\Norm{g}{HK_r}.
\end{equation}

(b) To show that $\Lambda_f g$ is independent of the expansion of $g$ and thus well-defined, we derive an alternative representation for $\Lambda_f g$. For a function $f$ and a number $N>0$, let
\begin{equation*}
  f_N(x):=\begin{cases} f(x), & \text{if }\abs{f(x)}\leq N, \\ \frac{N}{\abs{f(x)}}f(x), & \text{if }\abs{f(x)}>N. \end{cases}
\end{equation*}
Then a well-known estimate from the standard $\BMO$ theory says that
\begin{equation*}
  \Big(\fint_Q\abs{f_N-\ave{f_N}_Q}^q\Big)^{1/q}\lesssim
  \Big(\fint_Q\abs{f-\ave{f}_Q}^q\Big)^{1/q},
\end{equation*}
which implies that $\Norm{f_N}{JN_{r'}}\lesssim\Norm{f}{JN_{r'}}$ uniformly in $N$. On the other hand, it is clear that $f_N\in L^\infty(Q_0)$. From Remark~\ref{rem:Lr}, we see that the polymeric expansion $g=\sum_{i=1}^\infty g_i\in HK_{r}(Q_0)$ converges in $L^r(Q_0)$ and hence in $L^1(Q_0)$. Thus the product $f_N g$ is integrable, and
\begin{equation*}
  \int_{Q_0}f_Ng=\sum_i\int_{Q_0}f_N g_i
  =\sum_i\sum_j\int_{Q_{i,j}} f_N a_{ij},
\end{equation*}
where $g_i=\sum_j a_{ij}$ is a disjoint atomic expansion of the polymer $g_i$. Here $a_{ij}\in L^s_0(Q_{i,j})$, while $f\in JN_{r'}(Q_0)\subset L^{s'}(Q_0)$ since $s' < r'$. Hence $\int_{Q_{i,j}}f_N a_{ij}\to\int_{Q_{i,j}}f a_{ij}$ as $N\to\infty$ by dominated convergence. On the other hand, we have
\begin{equation*}
 \Babs{\int_{Q_{i,j}}f_N a_{ij}}\lesssim \abs{Q_{i,j}} \Big(\fint_{Q_{i,j}}\abs{f-\ave{f}_{Q_{i,j}} }^{s'}\Big)^{1/s'} \Big(\fint_{Q_{i,j}}\abs{a_{ij}}^s\Big)^{1/s},
\end{equation*}
where the right side is independent of $N$ and summable over $j$ with a bound that is a constant multiple of $\Norm{f}{JN_{r'}}\Norm{(a_{ij})_{j=1}^\infty}{(r,s)}$. Thus
\begin{equation*}
  \int_{Q_0} f_N g_i=\sum_j\int_{Q_{i,j}}f_N a_{ij}
  \to \sum_j\int_{Q_{i,j}}f a_{ij}=\langle f, g_i \rangle,
\end{equation*}
again by dominated convergence (of the sum). Since further $\Norm{f}{JN_{r'}}\Norm{(a_{ij})_{j=1}^\infty}{(r,s)}$ is summable over $i$ by assumption, yet another application of dominated convergence proves that
\begin{equation*}
  \Lambda_f g=\sum_i\langle f, g_i \rangle=\lim_{N\to\infty}\sum_i\int_{Q_0} f_N g_i
  =\lim_{N\to\infty}\int_{Q_0}f_N g.
\end{equation*}
But here the right side makes no reference to any expansion (either of $g$ in terms of polymers, or their expansion in terms of atoms), so that this quantity is manifestly independent of any such representation.

(c) Let finally $\Lambda\in (HK_{r}(Q_0))^*$ be given, and fix some $s\in(r,\infty)$. Since $HK_r(Q_0)\simeq HK_{rs}(Q_0)$, Remark~\ref{rem:Lr} shows $L^s_0(Q_0)$ is a dense subspace of $HK_r(Q_0)$. Let $\tilde\Lambda$ be the restriction of $\Lambda$ to $L^s_0(Q_0)$. Then $\tilde\Lambda\in(L^s_0(Q_0))^*$, and hence (by the well-known duality of the $L^p$ spaces) $\tilde\Lambda$ has a representation
\begin{equation*}
  \tilde\Lambda g=\int_{Q_0}fg
\end{equation*}
for some $f\in L^{s'}(Q_0)$.

We check that $f\in JN_{r'}(Q_0)$. Let $(Q_j)_{j=1}^N$ be a finite sequence of disjoint subcubes $Q_j\subset Q_0$. For every $j$, we have
\begin{equation*}
  \Big(\fint_{Q_j}\abs{f-\ave{f}_{Q_j}}^{s'}\Big)^{1/s'}
  = \sup\Big\{\fint_{Q_j}f b_j: b_j\in L^s_0(Q_j), \fint_{Q_j}\abs{b_j}^s= 1\Big\}.
\end{equation*}
For every $j$, we pick some $b_j$ that almost achieves the supremum. Set $g = \sum_j \lambda_j b_j$, where the positive numbers $\lambda_j$ are chosen so that $\sum_{j=1}^N\abs{Q_j}\lambda_j^r=1$ and
\begin{equation*}
\begin{split}
  &\Big(\sum_j\abs{Q_j}\Big(\fint_{Q_j}\abs{f-\ave{f}_{Q_j}}^{s'}\Big)^{r'/s'}\Big)^{1/r'} \\
  &=\sum_j\abs{Q_j}\lambda_j\Big(\fint_{Q_j}\abs{f-\ave{f}_{Q_j}}^{s'}\Big)^{1/s'} \\
  &\eqsim\sum_j\abs{Q_j}\lambda_j \fint_{Q_j}f b_j 
  =\int_{Q_0}f\sum_j \lambda_j b_j=\int_{Q_0}f g =\tilde\Lambda g =\Lambda g.
\end{split}
\end{equation*}
Note that each $b_j\in L^s_0(Q_0)$, and hence $g\in L^s_0(Q_0)$. Moreover, $g$ is an $(r,s)$-polymer with
\begin{equation*}
  \Norm{g}{(r,s)}\leq\Big[\sum_j\abs{Q_j}\Big(\fint_{Q_j}\abs{\lambda_j b_j}^s\Big)^{r/s}\Big]^{1/r}
  =\Big[\sum_j\abs{Q_j}\lambda_j^r\Big]^{1/r}=1.
\end{equation*}
Hence also $\Norm{g}{HK_r}\leq 1$, and therefore $\abs{\Lambda g}\leq\Norm{\Lambda}{(HK_r)^*}$. This gives
\begin{equation*}
  \Big(\sum_j\abs{Q_j}\Big(\fint_{Q_j}\abs{f-\ave{f}_{Q_j}}^{s'}\Big)^{r'/s'}\Big)^{1/r'}
  \lesssim\Norm{\Lambda}{(HK_r)^*}
\end{equation*}
for all finite families of disjoint cubes $Q_j$, from which the same estimate for countable families is immediate. Thus $f\in JN_{r'}(Q_0)$ and
\begin{equation}\label{eq:JNLessLambda}
  \Norm{f}{JN_{r'}}\lesssim\Norm{\Lambda}{(HK_r)^*}.
\end{equation}

(d) By parts (a) and (b) of the proof, we know that the $f$ above induces a functional $\Lambda_f$ on $HK_r(Q_0)$. If $g\in L^s_0(Q_0)$ then $g$ is an $(r,s)$-polymer consisting of one atom and so by definition $\Lambda_f g=\langle f, g \rangle = \int_{Q_0}fg$. But, for such functions, we also have $\Lambda g=\tilde\Lambda g=\int_{Q_0}fg$. Hence the functionals $\Lambda$ and $\Lambda_f$ agree on the subspace $L^s_0(Q_0)$ of $HK_r(Q_0)$. As noted above, this subspace is dense in $HK_r(Q_0)$. Since the continuous linear functionals $\Lambda$ and $\Lambda_f$ agree on a dense subspace, they must be equal. Combining the bounds \eqref{eq:LambdaLessJN} and \eqref{eq:JNLessLambda}, we also see that
\begin{equation*}
  \Norm{\Lambda_f}{(HK_r)^*}\lesssim\Norm{f}{JN_{r'}}\lesssim\Norm{\Lambda_f}{(HK_r)^*},
\end{equation*}
so that the two norms are equivalent.
\end{proof}

\begin{remark}
The only place in the above argument where the properties of real numbers beyond their Banach space structure were used was the representation of $\tilde\Lambda\in(L^s_0(Q_0))^*$ by a function $f\in L^{s'}(Q_0)$. For functions taking values in a Banach space $X$, the duality $(L^s(Q_0;X))^*=L^s(Q_0;X^*)$, for $1<s<\infty$, remains valid under the assumption that $X^*$ has the so-called \emph{Radon--Nikod\'ym property} (see \cite{HNVW}, Definitions 1.3.9, 1.3.27 and Theorems 1.3.10, 1.3.26). By the proof above, the duality $(HK_r(Q_0;X))^*\simeq JN_{r'}(Q_0;X^*)$, for $1<r<\infty$, remains valid under the same assumption.
\end{remark}

%\bibliography{JNp}
%\bibliographystyle{abbrv}
%\end{document}

\end{document}